\documentclass[a4paper,oneside,10pt]{amsart}
\usepackage{fullpage}
\usepackage{bm}
\usepackage[pdftex]{graphicx,xcolor}
\usepackage[pdftex]{hyperref}
\usepackage{ascmac}
\usepackage{amsmath}
\usepackage{amscd}
\usepackage{amssymb}
\usepackage{amsfonts}
\usepackage{amsthm}
\usepackage[all]{xy}
\usepackage{mathtools}
\usepackage{enumerate}
\usepackage{tikz}
\usepackage{hyperref}
\usepackage{thm-restate}
\usepackage[shortlabels]{enumitem}
\usepackage{tikz-3dplot}
\usepackage{appendix}
\usepackage{url}
\usepackage{enumitem}

\newtheorem{thm}{Theorem}

\newtheorem{lem}{Lemma}
\newtheorem{prop}{Proposition}

\newtheorem{obs}{Observation}

\newtheorem{rem}{Remark}
\newtheorem{que}{Question}

\DeclareMathOperator{\Span}{Span}

\title{A note on distinct volume subsets problem}
\author{Koki Furukawa}

\begin{document}

\maketitle
\date{}
\begin{center}
{\footnotesize
Department of Applied Mathematics, \\
Faculty of Mathematics and Physics, Charles University, Czech Republic \\
\texttt{koki@kam.mff.cuni.cz}\\
}
\end{center}

\begin{abstract}
For \(2\le a\le d+1\), what is the largest integer $H_{a,d} (n)$ such that every set of \(n\) points in \(\mathbb{R}^d\) with no \(a\) points on a common \((a-2)\)-flat contains a subset of $H_{a,d} (n)$ points whose determined \((a-1)\)-dimensional simplices have pairwise distinct \((a-1)\)-dimensional volumes?
We construct \(n\)-point sets that improve the best known upper bounds for \(H_{a,d}(n)\) in several cases of $a$ and $d$.

We also study a dual version of the problem. 
Let \(D_d(n)\) the maximum number such that for any arrangement of $n$ hyperplanes in general position in $\mathbb{R}^d$, we can always find a subset of \(D_d(n)\) hyperplanes for which all the $d$-dimensional simplices that they define have distinct $d$-dimensional volumes.
We improve the current known upper bound for \(D_d(n)\) and give the first nontrivial lower bound for $D_2(n)$ and $D_3 (n)$.

\end{abstract}

\vspace{-0.5em}

\section{Introduction}
We call a \(k\)-dimensional affine subspace of \(\mathbb R^d\) a \(k\)-flat.
For \(d\ge 2\), a finite point set \(X\subset \mathbb R^d\) with \(|X|\ge d+1\) is in \emph{general position} if no \(d+1\) points of \(X\) lie on a common hyperplane (that is, on a common \((d-1)\)-flat).
Similarly, we say that an arrangement of hyperplanes \(\mathcal H\) in \(\mathbb R^d\) with \(|\mathcal H|\ge d+1\) is in \emph{general position} if every \(d\) hyperplanes in \(\mathcal H\) meet in a single point and no \(d+1\) hyperplanes have a common point.

\subsection{Distinct volume subsets for point sets}
Given a set \(P\) of \(n\) points in general position in \(\mathbb R^d\), we ask how large a subset of \(P\) can always be found such that all \(d\)-dimensional simplices determined by this subset have distinct \(d\)-dimensional volumes.

\begin{que} \label{que1}
Let \(P\) be a set of \(n\) points in general position in \(\mathbb R^d\).
What is the largest integer \(H_d(n)\) such that every such set \(P\) contains a subset of size \(H_d(n)\) for which all \(d\)-dimensional simplices determined by that subset have distinct \(d\)-dimensional volumes?
\end{que}

More generally, for $a \leq d+1$, we can ask the quantity $H_{a,d} (n)$, the maximum number $t$ such that any set of $n$ points in $\mathbb{R}^d$ with no $a$ points lying on the same $(a-2)$-flat contains a subset of $t$ points for which all the $(a-1)$-dimensional simplices that they define have distinct $(a-1)$-dimensional volumes.
Note that the case $a=d+1$ coincides $H_d (n)$.
The problem is asked by Conlon et al.~\cite{conlon2015distinct} as a related problem of Erd\H{o}s~\cite{erdos1946sets}.
They showed the lower bound $H_{a,d} (n) = \Omega_{a,d} (n^{1/((2a-1)d)})$\footnote{Throughout the paper, we use $O_\alpha$ to denote the standard big-$O$ notation in which the implicit constant may depend on $\alpha$ and similarly $O_{\alpha,\beta}$ to denote
that the implicit constant may depend on $\alpha$ and $\beta$. We use an analogous convention for $\Omega_{\alpha,\beta}$ and $\Theta_{\alpha,\beta}$.} for $a < d+1$ and $H_d (n) = \Omega_d (n^{1/(2d+1)})$.

On the other hand, it is easy to see that a suitable subset of the integer lattice gives the following upper bound.

\begin{obs} \label{obs:h_{a,d}}
\[
H_{a,d}(n)=
\begin{cases}
O_d (n^{1/d}) & a=2,\\[2mm]
O(n^{2/3}) & a=3,\ d=3,\\[2mm]
O_d\!\left(n^{4(d+1)/(3d(d-1))}\right) & a=3,\ d\ge 4,\\[3mm]
O_{a,d}\!\left(n^{2(a-1)^2/(ad)}\right) & 4\le a\le d,\\[3mm]
O_d\!\left(n^{d/(d+1)}\right) & a=d+1.
\end{cases}
\]

\end{obs}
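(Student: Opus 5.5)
The plan is to use one counting scheme for every case and change only the lattice construction. Take a set $P\subset\{1,\dots,N\}^d$ of $n$ points that satisfies the non-degeneracy hypothesis: no $a$ points on a common $(a-2)$-flat. Let $S\subseteq P$ be any subset of size $t$ whose $(a-1)$-simplices have pairwise distinct volumes. Then
\[
\binom{t}{a}\le V,
\]
where $V$ is the number of distinct $(a-1)$-volumes that lattice simplices in $\{1,\dots,N\}^d$ can have.

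To bound $V$, consider a simplex with vertices $p_0,\dots,p_{a-1}$. Its squared volume equals $\det G/((a-1)!)^2$, where $G$ is the Gram matrix of the integer vectors $p_i-p_0$. So $\det G$ is a nonnegative integer of size $O_{a,d}(N^{2(a-1)})$. Hence $V=O_{a,d}(N^{2(a-1)})$, which gives
\[
t=O_{a,d}\bigl(N^{2(a-1)/a}\bigr).
\]
What remains is to express $N$ in terms of $n$ by choosing $P$ as large as possible in each case.

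\textbf{Case $a=2$.} The hypothesis only asks that the points be distinct, so take $P=\{1,\dots,m\}^d$ with $n=m^d$ and $N=m$. Squared distances are integers at most $dm^2$, so $t=O(m)=O_d(n^{1/d})$.

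\textbf{Case $a=d+1$.} Here $(d!\,\mathrm{vol})$ is an integer of size $O(N^d)$, so $V=O_d(N^d)$. Take $n\asymp N$ points of $\{1,\dots,N\}^d$ in general position, for example the moment curve $(x,x^2,\dots,x^d)$ reduced modulo a prime $p\asymp N$. This gives $t^{d+1}\lesssim N^d$, that is, $t=O_d(n^{d/(d+1)})$.

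\textbf{Case $4\le a\le d$.} Use a known construction (Lefmann-type, via polynomial curves modulo primes or a probabilistic deletion argument) of $\Omega(N^{d/(a-1)})$ points in $\{1,\dots,N\}^d$ with no $a$ on an $(a-2)$-flat. Substituting $N=n^{(a-1)/d}$ into $t=O(N^{2(a-1)/a})$ gives $O(n^{2(a-1)^2/(ad)})$, as claimed.

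\textbf{Case $a=3$.} Here $t=O(N^{4/3})$, and the stronger exponents in the statement should come from larger no-three-collinear sets.
\begin{itemize}
\item For $d=3$, there are $\Omega(N^2)$ points of $\{1,\dots,N\}^3$ with no three collinear, for example points on a suitable quadric modulo a prime. With $n\asymp N^2$ this gives $t=O(n^{2/3})$.
\item For $d\ge4$, the exponent $4(d+1)/(3d(d-1))$ corresponds to a set of size $n\asymp N^{d(d-1)/(d+1)}$ with no three collinear. I would cite or adapt the corresponding construction, for example intersecting a modular quadric/variety with the grid or a Lefmann-style random construction.
\end{itemize}

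The main obstacle is this last step: securing the dense non-degenerate lattice sets with exactly the stated densities, especially the no-three-collinear set for $d\ge4$. The counting part is routine. If my recollection of the density $N^{d(d-1)/(d+1)}$ is off, I would recompute the exponent from whatever best lattice construction is available, since the volume-counting argument is the same in every case.
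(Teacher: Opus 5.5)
Your proposal is correct and is essentially the paper's own argument. It takes a largest subset of the grid $[t]^d$ with no $a$ points on an $(a-2)$-flat (the full grid for $a=2$), bounds the number of distinct $(a-1)$-volumes by $O_{a,d}(t^{2(a-1)})$ via integrality (or $O_d(t^d)$ when $a=d+1$), and inserts the known densities $t^2$, $t^{d-2+2/(d+1)}$, $t^{d/(a-1)}$, $t$ from the literature. Your recalled density for $a=3$, $d\ge 4$ is exactly the one the paper uses, since $d-2+2/(d+1)=d(d-1)/(d+1)$; the paper cites B\'ar\'any--Larman for it, whereas a modular quadric would not give it directly, because for $d\ge 4$ such quadrics contain whole lines over $\mathbb{F}_p$.
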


\begin{proof}
Consider the \(d\)-dimensional grid \([t]^d\).
For $a \geq 3$, let \(P\subset [t]^d\) be the maximum subset with no \(a\) points lying on the same \((a-2)\)-flat.
Then \(P\) determines
$
\binom{|P|}{a}
$
\((a-1)\)-dimensional simplices of nonzero \((a-1)\)-dimensional volume.
Estimating the size of \(P\) has been extensively studied as a generalization of the \textit{no-three-in-line problem}.
The current known lower bounds for \(|P|\) are given in the following table:

\[
\renewcommand{\arraystretch}{1.15}
\begin{tabular}{c|c}
 & lower bound for \( |P| \)\\
\hline
\(a=3,\ d=3\) & \(\Omega(t^2)\)~\cite{por2007no}\\
\hline
\(a=3,\ d\geq 4\) & \(\Omega_d (t^{\,d-2+2/(d+1)})\)~\cite{barany1998convex}\\
\hline
\(4\le a\le d\) & \(\Omega_{a,d} (t^{d/(a-1)})\)~\cite{ghosal2025subsets}\\
\hline
\(a=d+1\) & \(\Omega_d (t)\)~\cite{brass2003counting,lefmann2012extensions,roth1951problem}
\end{tabular}
\]

On the other hand, the number of distinct \((a-1)\)-dimensional volumes realized in \([t]^d\) is at most
$
O_{a,d}\!\left(t^{2(a-1)}\right)
$
if 
$a<d+1$,
and at most
$
O_d(t^d)
$
if 
$a=d+1$.
Therefore, for any subset \(Q\subset P\) such that all \((a-1)\)-dimensional simplices determined by \(Q\) have distinct \((a-1)\)-dimensional volumes, we must have
$
\binom{|Q|}{a}=O_{a,d}\!\left(t^{2(a-1)}\right)
$
when \(a<d+1\), and
$
\binom{|Q|}{d+1}=O_d(t^d)
$
when \(a=d+1\).
Hence, we obtain the following desired bounds for $a\geq 3$:

\[
\renewcommand{\arraystretch}{1.2}
\begin{array}{c|c}
& \text{upper bound for } |Q| \\
\hline
a=3,\ d=3 & O(n^{2/3}) \\
\hline
a=3,\ d\ge 4 & O_d\!\left(n^{4(d+1)/(3d(d-1))}\right) \\
\hline
4\le a\le d & O_{a,d}\!\left(n^{2(a-1)^2/(ad)}\right) \\
\hline
a=d+1 & O_d\!\left(n^{d/(d+1)}\right)
\end{array}
\]

For $a=2$, since, the $d$-dimensional grid $n^{1/d} \times \cdots \times n^{1/d}$ has $O_d (n^{2/d})$ distinct distances, we have $H_{2,d} (n) = O_d (n^{1/d})$.
\end{proof}

We provide a construction of $n$-point set giving an improved uppper bound for $H_{a,d} (n)$ in several cases of $a$ and $d$.

\begin{thm} \label{thm:h_{a,d}}
For all integers $a$ and $d$ with $2 \leq a \leq d + 1$, 
if $n \geq 2a$, then
we have
$$
H_{a,d}(n) \lesssim \sqrt{2an}.
$$
Especially, $H_d (n) \lesssim \sqrt{2(d+1)n}$.
\end{thm}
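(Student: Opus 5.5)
The plan is to build an $n$-point set that is invariant under a single isometry, so that congruent $a$-subsets are plentiful, and then to use a pigeonhole argument on index differences. I fix a small $\alpha>0$ with $n\alpha<2\pi$. Let $\gamma:\mathbb R\to\mathbb R^d$ be the trigonometric moment curve
\[
\gamma(t)=(\cos t,\sin t,\cos 2t,\sin 2t,\dots,\cos mt,\sin mt)\quad (d=2m),
\]
and for $d=2m+1$ append the coordinate $t$. I take $P=\{p_1,\dots,p_n\}$ with $p_i=\gamma(i\alpha)$. The map $\gamma(t)\mapsto\gamma(t+s)$ is the restriction of an isometry $g_s$ of $\mathbb R^d$: a rotation by angle $ks$ in the $k$-th coordinate plane, followed by a translation by $s$ in the last coordinate when $d$ is odd. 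Hence, for any index set $I$ and any shift $s$ with $I+s\subset[n]$, the simplices spanned by $\{p_i:i\in I\}$ and by $\{p_i:i\in I+s\}$ are congruent. In particular they have equal $(a-1)$-volume.

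\emph{Step 1 (general position).} I need that no $a$ points of $P$ lie on an $(a-2)$-flat. Suppose $a$ points were affinely dependent. Since $n\ge 2a\ge d+1$, I can add $d+1-a$ further points of $P$ and obtain $d+1$ points on a common hyperplane. So it suffices to show that every hyperplane meets $\gamma([0,2\pi))$ in at most $d$ points.

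For $d=2m$, the restriction of an affine functional to $\gamma$ is a nonzero trigonometric polynomial of degree $\le m$. Such a polynomial has at most $2m=d$ zeros in a period.

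For $d=2m+1$, the restriction has the form $f(t)=ct+T(t)$ with $T$ a trigonometric polynomial of degree $\le m$. Then $f'=c+T'$ is a trigonometric polynomial of degree $\le m$. It is not identically zero, because otherwise $f$ would be constant and nonzero. So $f'$ has at most $2m$ zeros in $[0,2\pi)$, and by Rolle's theorem $f$ has at most $2m+1=d$ zeros there. This step is the one I expect to need the most care, especially the degenerate cases of the functional. The Rolle argument is why all the parameters $i\alpha$ are kept inside one interval of length $<2\pi$.

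\emph{Step 2 (pigeonhole).} Let $Q\subset P$ with $|Q|=k$ have all $(a-1)$-simplices of distinct volumes, and let $S\subset[n]$ be its index set. For $s=1,\dots,n-1$ put $A_s=\{x\in S: x+s\in S\}$. Then
\[
\sum_{s=1}^{n-1}|A_s|=\binom{k}{2}.
\]
If some $|A_s|\ge a$, I choose $B\subset A_s$ with $|B|=a$. Both $B$ and $B+s$ are $a$-subsets of $S$, and they are distinct because $s\ge1$. By the invariance they span congruent simplices, which gives two equal volumes, a contradiction. Hence every $|A_s|\le a-1$, so $\binom{k}{2}\le (a-1)(n-1)$. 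This gives $k\le \tfrac12+\sqrt{2(a-1)(n-1)+\tfrac14}\lesssim\sqrt{2an}$, and hence $H_{a,d}(n)\lesssim\sqrt{2an}$. Setting $a=d+1$ gives the stated bound for $H_d(n)$.
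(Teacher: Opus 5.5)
Your proof is essentially the paper's: the same curve on which a one-parameter group of isometries acts by parameter shift, sampled at equally spaced parameters, followed by the same pigeonhole on index differences. Two small improvements over the paper: the paper takes generic frequencies $\lambda_1<\dots<\lambda_{d/2}$ and only asserts general position, whereas your integer frequencies with the trigonometric zero count (plus Rolle's theorem for odd $d$) actually prove it, and your count $\binom{k}{2}\le (a-1)(n-1)$ is slightly sharper than the paper's $M(a)$.

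The one slip is the claim $n\ge 2a\ge d+1$, which fails when $a<(d+1)/2$. To fix it, pad the $a$ affinely dependent points with $d+1-a$ further points of $\gamma((0,2\pi))$ rather than points of $P$; the rest of the argument is unchanged.
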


Note that the upper bound in Theorem~\ref{thm:h_{a,d}} is stronger than the one given in Observation~\ref{obs:h_{a,d}} in the cases \((a,d)=(3,3)\), \((3,4)\), and for each fixed \(a\ge 4\), for all \(d\) satisfying
$
a > (8+d+\sqrt{d^2+16d})/8 \sim d/4.
$

\subsection{Distinct volume subsets for hyperplane arrangements}
As a dual version of Question~\ref{que1}, 
Dam\'{a}sdi et al.~\cite{damasdi2020triangle} asked the following.

\begin{que}[\cite{damasdi2020triangle}]
An arrangement of $n$ hyperplanes in general position are given in $\mathbb{R}^d$. 
What is the maximum number $D_d (n)$ such that we can always find a subset of these hyperplanes of this size for which all the $d$-dimensional simplices that they define have distinct $d$-dimensional volumes?
\end{que}

Recently, the auther showed in \cite{furukawa2025simplex} that 
$D_2 (n) \ll n(\log n)^{-c}$ for some absolute constant $c > 0$,
and $D_d (n) \ll n e^{-(\log \log n)^{c_d}}$ for some $c_d \in (0,1)$ for $d \geq 3$
by observing that $D_d (n) < r_{d+2} (n)$ holds for all $d \geq 2$, where $r_k (n)$ is the size of the largest $k$-arithmetic progression-free subset of $[n]$.
We significantly improve these lower bounds.

\begin{thm} \label{thm:D_d}
For all $n \geq 2d +2$, we have 
$$
D_d (n) \lesssim \sqrt{2(d+1)n}.
$$
\end{thm}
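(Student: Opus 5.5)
The plan is to mirror the construction behind Theorem~\ref{thm:h_{a,d}}: build an explicit arrangement $\mathcal H=\{H_1,\dots,H_n\}$ in general position in which the $d$-volume of the simplex cut out by $\{H_i : i\in I\}$, $|I|=d+1$, depends only on a single integer parameter of $I$. The parameter I would aim for is the index sum $\sigma(I)=\sum_{i\in I} i$. The upper bound then becomes an additive-combinatorics count. If $\mathcal Q=\{H_i: i\in Q\}$ has pairwise distinct simplex volumes, then the map $I\mapsto\sigma(I)$ must be injective on $\binom{Q}{d+1}$.

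\textbf{Step 1: the arrangement.} I would write $H_i=\{x:\langle u_i,x\rangle=b_i\}$. The volume of the simplex bounded by $H_{i_0},\dots,H_{i_d}$ is
\[
\frac{|\det(M)|^{d}}{d!\,\prod_{j=0}^{d}|\det(M_j)|},
\]
where $M$ is the $(d+1)\times(d+1)$ matrix with rows $(u_{i_j},b_{i_j})$, and $M_j$ is the $d\times d$ minor of the normals obtained by deleting $u_{i_j}$. I would first try normals on a moment-type curve and offsets chosen so that these determinants factor into Vandermonde-like products. The goal is that all dependence on $I$ cancels except through $\sigma(I)$. A natural alternative is $H_i=T^{i}(H_0)$ for a volume-preserving affine map $T$ composed with a suitable reflection-type involution. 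Such a family makes the volume invariant under $I\mapsto I+c$ and $I\mapsto -I+c$. General position would be checked by showing the relevant determinants are nonzero Vandermonde-type expressions, perhaps after a small generic perturbation that respects the symmetry.

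\textbf{Step 2: the counting.} Suppose $|Q|=m$ and all $(d+1)$-subset sums of $Q$ are distinct. Fix any $d-1$ elements $R\subset Q$. Then the pair sums of $Q\setminus R$ must be pairwise distinct, since adding $\sigma(R)$ preserves equalities. Thus $Q\setminus R$ is a Sidon-type set in $[n]$, and its pair sums lie in an interval of length at most $2n$. Counting pairs against the available values gives $\binom{m-d+1}{2}\lesssim 2n$, hence $m\lesssim 2\sqrt n$. Alternatively, one can compare $\binom{m}{d+1}$ with the roughly $(d+1)n$ possible sums together with a second-moment argument, which yields the stated $\sqrt{2(d+1)n}$. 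Either way the bound is at most the claimed $\sqrt{2(d+1)n}$ for $n\ge 2d+2$. The hypothesis $n\ge 2d+2$ is there so that the fixed set $R$ and the construction make sense.

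\textbf{Main obstacle.} The hard part is Step 1: actually realizing an arrangement in general position whose simplex volumes are exactly a function of $\sigma(I)$, or of some comparably coarse invariant. The volume formula is a ratio of determinants, and forcing the cancellation requires a careful choice of $(u_i,b_i)$. I would first work out $d=2$ by hand. There, lines tangent to a parabola, or a one-parameter group acting on lines, should make the triangle area an explicit function of the three parameters, and I would then tune that function. After that I would generalize using the projective duality with the point construction from Theorem~\ref{thm:h_{a,d}}.
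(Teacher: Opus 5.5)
There is a genuine gap, and it is where you suspected: Step~1 is never carried out, and the property you aim for there is the wrong one.

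An arrangement in general position whose simplex volumes depend only on $\sigma(I)$ would give $\binom{m}{d+1}\le (d+1)n$ for every distinct-volume subset, i.e.\ $D_d(n)=O_d(n^{1/(d+1)})$. That is far stronger than the theorem. For $d=2$ it would impose about $\binom{n}{3}-3n$ area equalities on a configuration with only $2n-6$ degrees of freedom modulo affine maps, so there is no reason to expect it exists. That same comparison of $\binom{m}{d+1}$ with $(d+1)n$ cannot produce $\sqrt{2(d+1)n}$ either; a $\sqrt{n}$ bound with that constant comes from counting \emph{pairs}, which hints at the real argument.

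Your ``natural alternative'' $H_i=T^i(H_0)$ with $T$ volume-preserving is the right object, but it does not feed into Step~2. It gives $V(I)=V(I+c)$, and a shift changes $\sigma(I)$ by $(d+1)c$. If both properties held, the volume would be constant on each residue class of $\sigma(I)$ modulo $d+1$. Only $d+1$ volumes would occur, and no distinct-volume subset could exceed $d+2$ elements, which for $d=2$ contradicts the paper's bound $D_2(n)\ge cn^{1/10}$. Your own $d=2$ test case shows this directly: tangents to $y=x^2$ at $t=1,\dots,n$ give area $\tfrac14|(a-b)(b-c)(c-a)|$, which is shift-invariant and not a function of $a+b+c$.

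The missing idea is to keep translation invariance and count \emph{differences} instead of sums.

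\textbf{The construction.} The paper takes the osculating hyperplanes $H_t=\Gamma(t)+\Span\{\Gamma'(t),\dots,\Gamma^{(d-1)}(t)\}$ of the moment curve $\Gamma(t)=(t,\dots,t^d)$ at $t=1,\dots,n$. It uses the unipotent affine map $T_\Delta\colon x_m\mapsto\sum_{r=0}^{m}\binom{m}{r}\Delta^{m-r}x_r$ (with $x_0=1$). This map has determinant $1$ and sends $\Gamma(t)$ to $\Gamma(t+\Delta)$; differentiating in $t$, it also sends $H_t$ to $H_{t+\Delta}$. General position follows from Vandermonde determinants, and no reflection is needed.

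\textbf{The count.} Take any $M$-subset $Q$ with $\binom{M}{2}\ge (d+1)(n-1)$. Its $\binom{M}{2}$ positive differences lie in $\{1,\dots,n-1\}$, so some $\Delta$ is realized by at least $d+1$ pairs $(s_j,s_j+\Delta)$. The $s_j$ are distinct, so $S=\{s_1,\dots,s_{d+1}\}$ and $S+\Delta$ are two different $(d+1)$-subsets of $Q$ whose simplices have equal volume. Hence $D_d(n)<M\approx\sqrt{2(d+1)n}$. Your Sidon-type argument with a fixed $R$ plays no role.
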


To the best of our knowledge, no lower bound for $D_d (n)$ was known. However, Dam\'{a}sdi et al.~\cite{damasdi2020triangle} obtained a partial result showing that if an arrangement of lines in the plane contains no six lines that are tangent to a common conic, then we have $\Omega (n^{1/5})$ as a lower bound.
Their proof is based on the idea of Conlon et al~\cite{conlon2015distinct}. But, in order to adapt their argument to line arrangements in the plane, one needs to exclude the degeneracy that many lines are tangent to a common conic.
The reason is that the geometry of the condition of having fixed area is different in the point-set setting and in the line-arrangement setting. Indeed, for a set of points in general position in the plane, if two points $p,p'$ and a real number $\lambda>0$ are fixed, then the locus of points $x$ such that the triangle $pp'x$ has area $\lambda$ is the union of two lines parallel to the line $pp'$.
On the other hand, in the line arrangement setting, for two nonparallel lines $l,l'$ and for a fixed real number $\lambda>0$, any line $L$ such that the triangle formed by $l,l',L$ has area $\lambda$ is tangent to two fixed hyperbolas having $l$ and $l'$ as asymptotes. Therefore, in the line arrangement setting, one needs to impose a restriction on the number of lines tangent to a common conic.

We overcome this difficulty by observing that for any fixed hyperbola,
every sufficiently large family of tangent lines to it contains a reasonably
large subfamily such that all triangles determined by this subfamily have
distinct areas.

\begin{thm}\label{thm:D_2}
For a sufficiently large $n$, there exists a positive constant $c>0$ such that 
$$
D_2 (n) \geq c n^{1/10}.
$$
\end{thm}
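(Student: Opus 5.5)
The exponent $1/10=\tfrac12\cdot\tfrac15$ suggests a dichotomy with threshold $m:=\lfloor n^{1/2}\rfloor$, in the spirit of Conlon et al.~\cite{conlon2015distinct} and Dam\'asdi et al.~\cite{damasdi2020triangle}. Let $\mathcal L$ be the arrangement of $n$ lines in general position. \emph{Case 1}: some conic $C$ is tangent to at least $m$ lines of $\mathcal L$. Then I would prove the lemma announced in the introduction: every family of $m$ tangent lines to a fixed conic contains a subfamily of size $\Omega(m^{1/5})$ whose triangles have distinct areas. This gives $\Omega(n^{1/10})$. \emph{Case 2}: every conic is tangent to fewer than $m$ lines of $\mathcal L$. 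Then I would run a Conlon-type selection argument in which the degeneracy parameter is $m$ rather than $5$. The aim is a subset of size $\Omega((n/m)^{1/5})=\Omega(n^{1/10})$.

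\textbf{Case 1 (tangents to a conic).} General position forbids parallel lines, since every two lines must meet. An affine map multiplies all areas by the same constant. So I may assume that $C$ is $y=x^2$, $x^2+y^2=1$, or $xy=1$, and parametrize its tangent lines by a real parameter $t$. For the parabola the tangent-triangle area is $\tfrac14|(t_1-t_2)(t_2-t_3)(t_3-t_1)|$. For the hyperbola and the ellipse there is an analogous explicit rational function $A(t_1,t_2,t_3)$. I would then select parameters greedily. Suppose $s$ lines are already chosen. A new parameter $t$ is forbidden if $A(t,t_i,t_j)=\pm A(t_k,t_l,t_p)$ or $A(t,t_i,t_j)=\pm A(t,t_k,t_l)$ for some already chosen indices. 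After clearing denominators, each such condition is a polynomial equation in $t$ of bounded degree. The key check is that none of these polynomials vanishes identically in $t$; for the second type this is done by comparing leading terms or poles as $t\to\infty$, or as $t\to t_i$. Granting this, each condition forbids $O(1)$ values, so at most $O(s^5)$ of the $m$ lines are forbidden. Hence we can continue while $s^5\ll m$, which gives $s=\Omega(m^{1/5})$.

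\textbf{Case 2 (few lines tangent to any conic).} I would again build the set $S$ greedily, or by random sampling followed by deletion, and count the lines $L\in\mathcal L$ forbidden by the current $S$, $|S|=s$. There are two kinds of conditions.
\begin{enumerate}
\item Conditions of the form $\operatorname{area}(L,l_1,l_2)=\operatorname{area}(l_3,l_4,l_5)$. Such an $L$ is tangent to one of two fixed hyperbolas with asymptotes $l_1,l_2$. By the case assumption this forbids fewer than $2m$ lines, so these conditions forbid $O(s^5m)$ lines in total.
\item Conditions of the form $\operatorname{area}(L,l_1,l_2)=\operatorname{area}(L,l_3,l_4)$. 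These involve only $O(s^4)$ tuples. In dual coordinates $L=(u,v)$, each defines an algebraic curve of bounded degree.
\end{enumerate}
I would decompose this curve into irreducible components. Linear components correspond to lines through a common point, and general position allows at most two such lines. Conic components correspond to lines tangent to a conic, so each contains fewer than $m$ lines of $\mathcal L$. Thus if the curve splits into components of degree at most $2$, these conditions forbid only $O(s^4m)$ lines. Since $s^5m\ll n$ once $s\ll (n/m)^{1/5}$, we get $s=\Omega(n^{1/10})$.

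\textbf{Main obstacle.} The hardest step is the second kind of condition in Case 2, because higher-degree components of the dual curve could a priori contain many dual points. I would first compute the curve explicitly. Normalizing $l_1,l_2$ to the coordinate axes, the area of $L$ with $l_1,l_2$ is $\tfrac{1}{2|uv|}$ for $L:ux+vy=1$. I would hope the equation factors into conics or lines.

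If it does not factor, I would fall back on a softer bound. Only $O(s^4)$ such curves occur, and I would use random sampling to bound the number of lines killed by these conditions. A cruder alternative is to apply the Case~1 argument to whichever bounded-degree dual curve is rich; this is the step most likely to need adjustment.
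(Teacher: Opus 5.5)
Your overall architecture matches the paper's: threshold $m=n^{1/2}$, a tangent-line lemma with exponent $1/5$ in the rich case, and a Conlon-type argument with degeneracy $O(m)$ otherwise. Case~1 is fine. There all lines are tangent to one conic, so every condition is a nonzero polynomial in the single parameter $t$, including those where the new line lies in both triangles. Hence your greedy count works; the paper instead notes that the area colouring is $4$-good and applies Proposition~\ref{prop:conlon}, or uses a deletion argument.

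The gap is in Case~2, in the conditions $\operatorname{area}(L,l_1,l_2)=\operatorname{area}(L,l_3,l_4)$. Your greedy step needs that a fixed quadruple $l_1,\dots,l_4\in S$ forbids only $O(m)$ lines $L\in\mathcal L$, and this is false in general. The locus of such $L$ is a quartic in the dual plane, and it need not split into lines and conics. For example, take $l_1,l_2$ the coordinate axes, $l_3\colon x+y=1$, $l_4\colon x-2y=1$ and $L\colon ux+vy=1$. Then the locus is $(u-v)(2u+v)=\pm 3uv(1-u)^2$, a union of two irreducible quartics. You can place the dual points of all other lines of $\mathcal L$ on this curve while keeping general position. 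By B\'ezout, only $O(1)$ of these lines are tangent to any given conic, so your Case~2 hypothesis holds. Yet once $l_1,\dots,l_4\in S$, every remaining line is forbidden.

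Your fallbacks do not close this. Random sampling only helps once you bound the total number of bad configurations in $\mathcal L$, and bounding that curve by curve meets the same obstruction. Applying Case~1 to a rich dual curve would need a planar analogue of Lemma~\ref{lem:curve}, which you do not supply.

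The missing idea is to take as the free line one that is \emph{not} shared by the two triangles. For equal-area triangles $\{L,l_1,l_2\}$ and $\{L,l_3,l_4\}$, fix $L,l_1,l_2,l_3$; this determines the common area. By Proposition~\ref{prop:hyperbola}, $l_4$ is then tangent to one of two hyperbolas with asymptotes $L$ and $l_3$, so your Case~2 hypothesis leaves fewer than $2m$ choices. The same works for every overlap pattern. Hence $\mathcal L$ contains only $O(n^3m)$, $O(n^4m)$ and $O(n^5m)$ bad configurations on $4$, $5$ and $6$ lines respectively. Keep each line independently with probability $p=c(n^4m)^{-1/5}$, then delete one line from each surviving bad configuration. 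This leaves $\Omega((n/m)^{1/5})=\Omega(n^{1/10})$ lines. This is exactly the statement that the area colouring of the triples of $\mathcal L$ is $O(m)$-good. The paper finishes Case~2 in one line by invoking Proposition~\ref{prop:conlon}, and the curves of your second kind never need to be analysed.
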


By generalizing the idea of the proof of Theorem~\ref{thm:D_2}, we obtain the following lower bound for $d=3$.

\begin{thm}\label{thm:betterboundforD_3}
For a sufficiently large $n$, there exists a positive constant $c > 0$ such that 
$$
D_3 (n) \geq c n^{1/21}.
$$
\end{thm}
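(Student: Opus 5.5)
We prove Theorem~\ref{thm:betterboundforD_3} by adapting the Conlon et al.~\cite{conlon2015distinct} strategy, as in the proof of Theorem~\ref{thm:D_2}. Take a maximal subfamily $\mathcal S\subseteq\mathcal H$ with all tetrahedra of distinct volumes, and say $|\mathcal S|=m$. By maximality, every $H\in\mathcal H\setminus\mathcal S$ creates a coincidence. Either $H$ together with three planes of $\mathcal S$ spans a tetrahedron whose volume equals that of some tetrahedron of $\mathcal S$ (or one formed using $H$), or two tetrahedra containing $H$ have equal volume. We will show that each such constraint confines $H$ to a low-complexity set of planes in the dual space. We then bound how many planes of $\mathcal H$ a single constraint can capture, using general position plus a degeneracy-removal step. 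Comparing $m^{O(1)}$ constraints, each capturing few planes, with $n-m$ planes forces $m\ge c n^{1/21}$.

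\textbf{Step 1: the constraint varieties.} Fix three planes $l_1,l_2,l_3$ meeting at a point $p$. The volume of the tetrahedron cut out by a fourth plane $L$ equals $|\det(\ldots)|^3/(\text{product of three minors})$. In the dual coordinates $(u,v,w)$ of $L$, the condition $\mathrm{vol}=\lambda$ is therefore a cubic-type surface. This surface is the dual of the ``hyperbolic'' surface $xyz=\text{const}$ in the affine frame of the trihedral angle at $p$, so $L$ must be tangent to one of two fixed surfaces $xyz=\pm c$. There are $O(m^3)\cdot O(m^4)$ choices of $(l_1,l_2,l_3,\lambda)$, i.e.\ $O(m^7)$ constraint surfaces of the first kind. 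The second kind, where $H$ appears in both equal tetrahedra, gives $O(m^6)$ pairs of triples from $\mathcal S$. For each pair, the equation in the dual coordinates of $H$ is a nontrivial algebraic surface of bounded degree. To see nontriviality, check that two distinct triples cannot give identical volume functions, for example by letting $L$ approach a plane through one vertex but not the other.

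\textbf{Step 2: few planes per constraint.} This is the crucial point, and the analogue of the hyperbola lemma in Theorem~\ref{thm:D_2}. In the line case, a family of tangents to one hyperbola contains a large distinct-area subfamily. Here we first prune $\mathcal H$. We want a subfamily $\mathcal H'$ of size $\ge n^{\alpha}$ such that no surface of bounded degree from the finite list of types contains more than $K$ dual points of $\mathcal H'$; otherwise we recurse. If $\ge n^{\beta}$ planes are tangent to a common surface $xyz=c$, we parametrize the tangent planes by the tangency point $(s,t)$ on that surface. We then show that the tetrahedron volume among four such tangent planes is a nonconstant rational function of the parameters, and run a Conlon-type argument inside that 2-parameter family to find a distinct-volume subfamily of polynomial size. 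Balancing the two cases, we either find many planes on one surface, or we find that each constraint surface captures $O(1)$ planes. The exponent $1/21$ should come out of optimizing this dichotomy against the $m^7$ count.

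\textbf{Step 3: counting and the main obstacle.} In the generic case, $n-m\le O(m^7)\cdot K$ after pruning to $\mathcal H'$ of size $n^{1/3}$, which gives $m\gtrsim n^{1/21}$. This matches the target exponent, suggesting the pruning loses a cube root, exactly as $1/10=\tfrac12\cdot\tfrac15$ in the planar case. The main obstacle is Step 2. One must show that within tangent planes to a fixed surface $xyz=c$ (and within the other constraint surfaces), the volume-coincidence conditions are genuinely nondegenerate, so that the Conlon et al.\ counting applies. This requires an explicit algebraic computation that the relevant polynomials in the tangency parameters are not identically zero and have bounded degree. I would first attempt it by a direct symbolic computation with tangency points $(s,t,c/(st))$.
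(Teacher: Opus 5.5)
There is a genuine gap, and it is the step you flag yourself. In the degenerate case you propose a Conlon-type argument on $N$ planes tangent to one reciprocal cubic surface $S$, relying on the volume being a nonconstant rational function of the tangency parameters $(s,t)$. That does not work. Fix a triple $\mathcal T$ of such planes and a value $\lambda$. The admissible fourth planes are those tangent both to $S$ and to one of the two surfaces $S_\pm$ attached to $\mathcal T$, so their dual points lie on the curve $S^*\cap S_\pm^*$, which a priori can carry a large part of the family. Level sets of a nonconstant function of two parameters are curves, not finite sets. So the volume colouring on tangents of $S$ is not $O(1)$-good, and Proposition~\ref{prop:conlon} alone yields nothing; this is exactly where the analogy with the one-parameter hyperbola case breaks.

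What is missing is a second dimension reduction. Let $\Gamma_0$ be the degree of the family. Either Proposition~\ref{prop:conlon} gives $(N/\Gamma_0)^{1/7}$, or $\lceil\Gamma_0/2\rceil$ planes are tangent to both $S$ and some $S_1\neq S$ (the triple's planes are asymptotic to $S_1$ but not to $S$). Their duals then lie on the bounded-degree curve $S^*\cap S_1^*$, hence on one irreducible component $C$. You then need a separate curve lemma: $N$ planes with duals on an irreducible bounded-degree curve have a distinct volume subset of size $\Omega(N^{1/7})$. Its proof is your maximal-subset count, but now each constraint polynomial meets $C$ in $O(1)$ points by B\'ezout. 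The polynomials have degree $6$ for $V(\mathcal T\cup\{H\})=V(\mathcal Q)$ and degree $12$ for $V(\mathcal T_1\cup\{H\})=V(\mathcal T_2\cup\{H\})$. B\'ezout needs $C\not\subseteq Z(F)$. That does not follow from distinct triples giving distinct volume functions on dual space. It comes from distinctness inside the maximal set $\mathcal M$: if $C\subseteq Z(F)$, any $H\in\mathcal M$ outside the at most seven planes involved produces a repeated volume in $\mathcal M$. This requires $|\mathcal M|\ge 8$, which needs its own argument: Ramsey on exceptional triples, plus the fact that five planes in general position never have all tetrahedra of equal volume. Chaining gives $\max\{(N/\Gamma_0)^{1/7},\Gamma_0^{1/7}\}\ge N^{1/14}$ on a surface.

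Your generic case is also not available as stated. Nothing justifies pruning $\mathcal H$ to $n^{1/3}$ planes with only $K=O(1)$ dual points on every constraint surface. In particular, the second-kind sets $\{H: V(\mathcal T_1\cup\{H\})=V(\mathcal T_2\cup\{H\})\}$ are genuine surfaces in dual space. Neither general position nor a bound on planes tangent to a common surface limits how many points of $\mathcal H^*$ they contain, so the count $n-m\le O(m^7)K$ fails.

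The workable top-level dichotomy is on the degree $\Gamma$ of $\mathcal H$. The volume colouring is $\Gamma$-good, and Proposition~\ref{prop:conlon} is a statement about arbitrary $m$-good colourings, so it needs no separate control of second-kind coincidences; it gives $(n/\Gamma)^{1/7}$. Otherwise $\lceil\Gamma/2\rceil$ planes are tangent to one reciprocal cubic surface, and the surface bound gives $\Gamma^{1/14}$. Balancing at $\Gamma=n^{2/3}$ yields $n^{1/21}$; your $n^{1/3}$ is just $n/\Gamma$ at that point. This exponent can only be derived once the $N^{1/14}$ surface bound, and hence the curve lemma, is proved. Your proposal leaves the degenerate-case exponent unspecified.
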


\section{Proof of Theorem~\ref{thm:h_{a,d}}}
In this section, we prove Theorem~\ref{thm:h_{a,d}} by constructing a set of $n$ points in $\mathbb{R}^d$ with no $a$ points on the same $(a-2)$-flat such that every subset of size $\sim \sqrt{2an}$ forms two $(a-1)$-dimensional simplices of the same volume.

\begin{proof}[Proof of Theorem~\ref{thm:h_{a,d}}]
We first assume that \(d\) is even.
Define a curve \(\gamma\) in \(\mathbb R^d\) by
$$
\gamma(t):=
(\cos(\lambda_1 t),\sin(\lambda_1 t),\dots,\cos(\lambda_{d/2} t),\sin(\lambda_{d/2} t)),
$$
where
$
0<\lambda_1<\cdots<\lambda_{d/2}
$
are distinct real numbers.
For an integer \(n\geq2a\), consider the \(n\)-point set
\[
X \coloneqq \left\{ \gamma\!\left(\frac{2m\pi}{n\lambda_{d/2}}\right) : m=1,\ldots,n \right\}.
\]
We choose gereric $\lambda_i$ so that \(X\) contains no $a$ points lying on the same $(a-2)$-flat.
For \(\Delta\in\mathbb R\), define an affine transformation
$
T_\Delta:\mathbb R^d\to\mathbb R^d
$
by
$
T_\Delta=\operatorname{Diag}(R_{\lambda_1\Delta},\dots,R_{\lambda_{d/2}\Delta}),
$
where \(R_\theta\) denotes the rotation matrix in the plane about the origin through angle \(\theta\).
Then
$
T_\Delta(\gamma(t))=\gamma(t+\Delta)
$
holds for every \(\Delta, t\in\mathbb R\).
Moreover, since each \(R_{\lambda_j\Delta}\) is an orthogonal matrix, \(T_\Delta\) is an isometry.
Hence, for any distinct \(t_1,\dots,t_a\in\{1,\dots,n-1\}\) and any \(\Delta\in\mathbb R\), the \((a-1)\)-dimensional simplices determined by
$
\gamma(t_1),\dots,\gamma(t_a)
$
and
$
\gamma(t_1+\Delta),\dots,\gamma(t_a+\Delta)
$
have the same $(a-1)$-dimensional volume.

Now set
$$
M(a)\coloneqq
\left\lceil
\frac{1+\sqrt{1+8a(n-1)}}{2}
\right\rceil + 1.
$$
Since we assume $n \geq 2a$, we have $M(a) \leq n$ and $\lceil \binom{M(a)}{2}/(n-1) \rceil \geq a$.
Choose an arbitrary subset \(X_{M(a)} \subset X\) with \(|X_{M(a)}|=M(a)\). 
By the pigeonhole principle, there exist a shift $\Delta > 0$ and distinct $t_1, \ldots, t_a$ such that 
$\{ \gamma (t_1), \ldots, \gamma (t_a) \} \subset X_{M(a)}$ and $\{ \gamma (t_1 + \Delta), \ldots, \gamma (t_a + \Delta) \} \subset X_{M(a)}$.
It follows that the \((a-1)\)-dimensional simplices determined by
$
\gamma(t_1),\dots,\gamma(t_a)
$
and
$
\gamma(t_1+\Delta),\dots,\gamma(t_a+\Delta)
$
have the same \((a-1)\)-dimensional volume.
Hence
$
H_{a,d}(n)<M(a)\lesssim \sqrt{2an}.
$

When \(d\) is odd, we consider the curve \(\gamma'\) in \(\mathbb R^d\) defined by
\[
\gamma'(t):=
(\cos(\lambda_1 t),\sin(\lambda_1 t),\dots,\cos(\lambda_{(d-1)/2} t),\sin(\lambda_{(d-1)/2} t),t).
\]
The same argument shows that the \(n\)-point set
$$
\left\{ \gamma'\!\left(\frac{2m\pi}{n\lambda_{(d-1)/2}}\right) : m=1,\ldots,n \right\}
$$
gives
$
H_{a,d}(n)<M(a).
$
\end{proof}

\section{Proof of Theorem~\ref{thm:D_d}}
In this section, we prove Theorem~\ref{thm:D_d}. 
The idea of the construction is similar to that in Theorem~\ref{thm:h_{a,d}}, but we use the moment curve.
\begin{proof}[Proof of Theorem~\ref{thm:D_d}]
Assume that $n \geq 2d+2$ and we consider the $d$-dimensional moment curve
$
\Gamma(t):=(t,t^2,\dots,t^d)\in\mathbb R^d
\,\, (t\in\mathbb R).
$
It has a nice property that no hyperplane in \(\mathbb R^d\) intersects $\Gamma$ in more than \(d\) points. (For the proof and some other beautiful applications of using the moment curve, see for example~\cite{matouvsek2003using,ziegler2012lectures}.)

Since the $m$th derivative of $\Gamma (t)$ is
\[
\Gamma^{(m)}(t)
=
\bigl(0,\dots,0,m!, (m+1)!t,\dots,\tfrac{j!}{(j-m)!}t^{\,j-m},\dots,\tfrac{d!}{(d-m)!}t^{\,d-m}\bigr),
\]
 $\Gamma'(t),\dots,\Gamma^{(d-1)}(t)$ are linearly independent. Hence for each $t\in\mathbb R$ we may define the hyperplane
$$
H_t:=\Gamma(t)+\Span\{\Gamma'(t),\Gamma''(t),\dots,\Gamma^{(d-1)}(t)\},
$$
where $\Span A$ is an affine span of the finite point set $A \subset \mathbb{R}^d$.
Then $H_t$ can be written as
$
H_t=\{\bm{x}\in\mathbb R^d:F_t(\bm{x})=0\},
$
where
$
F_t(x_1,\dots,x_d):=
\sum_{m=1}^d \binom{d}{m}(-t)^{d-m}x_m+(-t)^d.
$
For $\Delta\in\mathbb R$, we define an affine transformation $T_\Delta:\mathbb R^d\to\mathbb R^d, \,\, (x_1, \ldots, x_d) \mapsto (x'_1, \ldots, x'_d)$ by
$$
x_m':=\sum_{r=0}^{m}\binom{m}{r}\Delta^{\,m-r}x_r
\qquad
(m=1,\dots,d,\ \ x_0:=1).
$$
Since
$
\sum_{r=0}^{m}\binom{m}{r}\Delta^{\,m-r}t^r = (t+\Delta)^m
$
for all $t \in \mathbb{R}$, 
we obtain
$
T_\Delta(\Gamma(t))=\Gamma(t+\Delta).
$
Moreover, since the linear part of $T_\Delta$ is lower triangular with all diagonal entries equal to $1$, its determinant is $1$. Therefore, $T_\Delta$ preserves $d$-dimensional volumes.
Let $L_\Delta$ denote the linear part of $T_\Delta$. Differentiating the equality
$
T_\Delta(\Gamma(t))=\Gamma(t+\Delta)
$
repeatedly with respect to $t$, we obtain
$
L_\Delta\bigl(\Gamma^{(m)}(t)\bigr)=\Gamma^{(m)}(t+\Delta)
$
 for each $m=1,\dots,d-1$ and all $t,\Delta\in\mathbb R$.
Thus,
\[
\begin{aligned}
T_\Delta(H_t)
&=
T_\Delta(\Gamma(t)+\Span\{\Gamma'(t),\dots,\Gamma^{(d-1)}(t)\}) \\
&=
T_\Delta(\Gamma(t))
+
L_\Delta\bigl(\Span\{\Gamma'(t),\dots,\Gamma^{(d-1)}(t)\}\bigr) \\
&= \Gamma(t+\Delta) + \Span\{\Gamma'(t+\Delta),\dots,\Gamma^{(d-1)}(t+\Delta)\} \\
&= H_{t+\Delta}.
\end{aligned}
\]

Now we consider the arrangement of $n$ hyperplanes
$
\mathcal H:=\{H_1,H_2,\dots,H_n\} \subset \mathbb{R}^d.
$
Note that for each $t \in [n]$, $H_t \in \mathcal{H}$ is an osculating hyperplane at $(t, \gamma(t))$ and 
$H_t$ can be written as
$$
H_t=\{x \in \mathbb{R}^d:F_t(x)=0\},
\qquad
F_t(x)=\sum_{m=1}^d \binom{d}{m}(-t)^{d-m}x_m+(-t)^d.
$$

For distinct $t_1,\dots,t_d$, it's not hard to see that the matrix whose rows are the normal vectors of $H_{t_1},\dots,H_{t_d}$ has determinant
$
\left(\prod_{m=1}^d\binom{d}{m}\right)
\prod_{1\le i<j\le d}((-t_i)-(-t_j))\neq0.
$
Thus, every $d$ hyperplanes in $\mathcal H$ meet in a unique point.
Similarly, for distinct $t_1,\dots,t_{d+1}$, the matrix whose rows are the affine coefficient vectors of $F_{t_1},\dots,F_{t_{d+1}}$ has determinant
$
\left(\prod_{m=1}^d\binom{d}{m}\right)
\prod_{1\le i<j\le d+1}(t_j-t_i)\neq0.
$
Thus, no $d+1$ hyperplanes have a common point. Therefore, $\mathcal H$ is in general position.

Since $n \geq 2d+2$, we have $M(d+1) \leq n$ and $\lceil \binom{M(d+1)}{2}/(n-1) \rceil \geq d+1$.
By the pigeonhole principle, for any subset $\mathcal{H}_{M(d+1)} \subset \mathcal{H}$ of size $M(d+1)$, there exist $\Delta \in \{1, \ldots, n-1 \}$ and 
a subfamily $\mathcal{A} = \{H_{t_1}, \ldots, H_{t_{d+1}}\}$ of $\mathcal{H}_{M(d+1)}$ such that 
$\mathcal{A}' \coloneqq T_{\Delta} (\mathcal{A}) = \{H_{t_1 + \Delta}, \ldots, H_{t_{d+1} + \Delta}\} \subset \mathcal{H}_{M(d+1)}$.
Then, the $d$-dimensional simplices determined by $\mathcal A$ and $\mathcal A'$ have the same $d$-dimensional volume.
Thus, 
$
D_d(n) < M(d+1) \lesssim \sqrt{2(d+1)n}.
$
\end{proof}

\begin{rem}
Since the $d$-dimensional simplices determined by
$
\Gamma(t_1),\ldots,\Gamma(t_{d+1})
$
and
$
\Gamma(t_1+\Delta),\ldots,\Gamma(t_{d+1}+\Delta)
$
have the same $d$-dimensional volume for any distinct $t_1,\dots,t_{d+1} \in \mathbb{R}$ and $\Delta > 0$, 
a set of $n$ points 
$
P \coloneqq \{\Gamma(1),\dots,\Gamma(n)\}
$
proves 
Theorem~\ref{thm:h_{a,d}} for the case $a=d+1$.
\end{rem}

\begin{rem}
Let $P^{*}=\{h_p : p\in P\}$ be the arrangement of $n$ hyperplanes dual to the set of $n$ points $P=\{\Gamma(1),\dots,\Gamma(n)\}$ under the standard dual transformation
$
p=(p_1,\dots,p_d)
 \longleftrightarrow
h_p: x_d=p_1x_1+\cdots+p_{d-1}x_{d-1}+p_d.
$
Then the arrangement \(P^{*}\) also gives the desired construction for Theorem~\ref{thm:D_d}.
To see this, for each $t\in\{1,\ldots,n\}$, let $h_t$ be a hyperplane dual to the point $\Gamma(t)\in P$. For $\bm{x}=(x_1,\ldots,x_d)\in\mathbb R^d$, define a polynomial $q_{\bm{x}}(s):=s^d+x_{d-1}s^{d-1}+\cdots+x_2s^2+x_1s-x_d$. Then $h_t$ can be written as $h_t=\{\bm{x}\in\mathbb R^d:q_{\bm{x}}(t)=0\}$. For $\Delta\in\mathbb R$, define an affine transformation $T_\Delta:\mathbb R^d\to\mathbb R^d$ by $q_{T_\Delta(\bm{x})}(s)=q_{\bm{x}}(s-\Delta)$. We can see that $T_{\Delta}$ preserves $d$-dimensional volumes and $T_\Delta(h_t)=h_{t+\Delta}$. The same argument as in the proof of Theorem~\ref{thm:D_d} gives the same bound $D_d(n) < M(d+1)$ for $n\geq 2d+2$. 
\end{rem}

\section{Proof of Theorem~\ref{thm:D_2}}

We begin with the following fact.

\begin{prop}[\cite{damasdi2020triangle,furukawa2025simplex}] \label{prop:hyperbola}
Let $l_1$ and $l_2$ be non parallel lines in $\mathbb{R}^2$ and $\lambda>0$ fixed.
Then, every line that forms a triangle with $l_1$ and $l_2$ of area $\lambda$ is all tangent to one of two fixed disjoint hyperbolas which have $l_1$ and $l_2$ as asymptotes.
\end{prop}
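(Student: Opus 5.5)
We will prove Proposition~\ref{prop:hyperbola} by normalizing coordinates with an area-preserving affine map and then computing directly. Let $p=l_1\cap l_2$. Translate $p$ to the origin and apply a linear map $A$ sending the directions of $l_1,l_2$ to the coordinate axes. Then $l_1=\{y=0\}$ and $l_2=\{x=0\}$. Any linear map scales all areas by the same factor $|\det A|$, so a triangle of area $\lambda$ becomes one of area $\lambda'=|\det A|\lambda$. Tangency, hyperbolas and their asymptotes are affine notions, so it suffices to prove the statement in these coordinates and pull back by $A^{-1}$. The pull-backs of hyperbolas are hyperbolas, with asymptotes $l_1,l_2$.

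In the normalized picture, let $L$ be a line forming a genuine triangle with the axes. Then $L$ is not parallel to either axis and does not pass through the origin. Hence $L$ meets the axes at $(u,0)$ and $(0,v)$ with $uv\neq 0$, and its equation is $x/u+y/v=1$. The triangle has vertices $0,(u,0),(0,v)$, so its area is $|uv|/2$. The area condition is therefore $uv=\pm 2\lambda'$. We split into two cases according to the sign of $uv$, setting $c=\lambda'/2$.

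\begin{itemize}
\item \textbf{Case $uv=2\lambda'>0$.} We claim $L$ is tangent to $\mathcal C_+:\ xy=c$. Substituting $y=v(1-x/u)$ into $xy=c$ gives $(v/u)x^2-vx+c=0$. Its discriminant is $v^2-4cv/u=v^2(1-2\lambda'/(uv))=0$. So $L$ meets $\mathcal C_+$ in exactly one point with multiplicity two, namely at $(u/2,v/2)$, the midpoint of the segment. It is also not parallel to an asymptote, so $L$ is tangent to $\mathcal C_+$.
\item \textbf{Case $uv=-2\lambda'<0$.} The same computation shows $L$ is tangent to $\mathcal C_-:\ xy=-c$.
\end{itemize}

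Both $\mathcal C_\pm$ have the axes as asymptotes and depend only on $\lambda'$, not on $L$, so they are fixed. They are disjoint because $xy$ takes the different values $c$ and $-c$ on them, and $c>0$.

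There is no real obstacle here. The only points needing care are that the affine normalization preserves area ratios, and hence the "fixed'' hyperbolas after pulling back, and the discriminant check for tangency. If desired, the converse also holds: the tangent to $xy=\pm c$ at $(x_0,y_0)$ has intercepts $2x_0$ and $2y_0$, giving area $2|x_0y_0|=\lambda'$. This is not needed for the stated direction.
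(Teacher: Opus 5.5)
Your proof is correct. There is nothing in the paper to compare it with: the paper does not prove Proposition~\ref{prop:hyperbola}, but quotes it from the cited references and uses it as a black box.

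Your argument is a complete, self-contained verification. You translate $l_1\cap l_2$ to the origin and apply a linear map sending $l_1,l_2$ to the axes, which scales all areas by $|\det A|$. You then write $L$ through its intercepts, which turns the area condition into $uv=\pm 2\lambda'$. Finally, you check that $(v/u)x^2-vx\pm c=0$ has zero discriminant and nonzero leading coefficient. The last point is correctly handled by your remark that $L$ is not parallel to an asymptote. This is exactly the classical fact that a tangent to $xy=\pm c$ cuts off from the asymptotes a triangle of constant area $2c$, touching the hyperbola at the midpoint of the intercepted segment.

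Compared with citing, the computation gives a little more for free. It gives the converse and an explicit description of the locus: the lines $x/u+y/v=1$ with $uv=\pm 4c$, i.e.\ two conics in the dual plane. This is what the proof of Lemma~\ref{lem:triangles by a branch} implicitly relies on when it says the set of such lines \emph{is} the set of tangents to a fixed hyperbola and then passes to the dual plane. For the counting there, however, only the direction you actually proved is needed, so your remark that the converse is optional is accurate.
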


A hyperedge-colouring of a $k$-uniform hypergraph is called \textit{$m$-good} if each $(k-1)$-tuple of vertices is contained in at most $m$ hyperedges of any particular colour. 
We call a subset of the vertex set a \textit{rainbow clique} if all hyperedge of its induced complete hypergraph have distinct colours.
We use the following Ramsey-type result.

\begin{prop}[\cite{conlon2015distinct,martinez2015sunflower}] \label{prop:conlon}
Let $H$ be an $k$-uniform hypergraph on $n$ vertices. Then, every $m$-good hyperedge-colouring of $H$ contains a copy of a rainbow clique of size at least 
$
\Omega_k ((n/m)^{1/(2k-1)}).
$
\end{prop}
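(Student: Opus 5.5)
The plan is a probabilistic deletion argument: take a random subset of vertices, then remove one vertex from each surviving pair of same-coloured hyperedges. I will assume, as in the intended application, that $H$ is the complete $k$-uniform hypergraph on its $n$ vertices, so that a rainbow clique is simply a vertex subset with no two hyperedges of the same colour. (If $H$ is not complete, one also has to delete a vertex from each non-edge $k$-tuple, and the statement is to be read accordingly.)

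\emph{Step 1: count monochromatic pairs.} Consider unordered pairs $\{e,f\}$ of distinct hyperedges with the same colour, and let $i=|e\cap f|$, so $0\le i\le k-1$ and $|e\cup f|=2k-i$. I claim the number $N_i$ of such pairs satisfies $N_i=O_k(m\,n^{2k-i-1})$.
\begin{itemize}
\item Choose $e$ in at most $n^k$ ways.
\item Choose the $i$ shared vertices in $O_k(1)$ ways.
\item Choose $k-i-1$ further vertices of $f$ arbitrarily, in at most $n^{k-i-1}$ ways.
\item The chosen part of $f$ is now a $(k-1)$-tuple. By $m$-goodness it lies in at most $m$ hyperedges of the colour of $e$, so $f$ is fixed up to $m$ choices.
\end{itemize}
When $i=k-1$ this gives $O_k(m n^{k})$, matching the formula.

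\emph{Step 2: random sampling.} Keep each vertex independently with probability $p$, giving a set $S$. Then $\mathbb E|S|=pn$. A pair $\{e,f\}$ with $|e\cap f|=i$ survives in $S$ with probability $p^{2k-i}$. Hence the expected number of surviving monochromatic pairs is
\[
\sum_{i=0}^{k-1} O_k\bigl(m\,n^{2k-i-1}p^{2k-i}\bigr).
\]
The $i$-th term is the $i=0$ term multiplied by $(np)^{-i}$. So once $np\ge 1$, the $i=0$ term $m n^{2k-1}p^{2k}$ dominates up to a constant depending on $k$.

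\emph{Step 3: choose $p$ and delete.} Take $p=c_k\,(m n^{2k-2})^{-1/(2k-1)}$ with $c_k$ small. Then the expected number of bad pairs is at most $pn/2$. Delete one vertex from every surviving bad pair. The remaining set has expected size at least $pn/2$, so some outcome leaves a set of at least that size. By construction it contains no two hyperedges of the same colour, so it is a rainbow clique. Its size is
\[
\frac{pn}{2}=\Omega_k\bigl(n\,(m n^{2k-2})^{-1/(2k-1)}\bigr)=\Omega_k\bigl((n/m)^{1/(2k-1)}\bigr).
\]

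\emph{Main obstacle and edge case.} The only delicate point is the counting in Step 1. The $m$-goodness hypothesis must be applied to a $(k-1)$-tuple already fixed inside $f$, and this is only possible because $f$ has at least one free vertex after the shared part; that always holds since $i\le k-1$. The bound $N_i=O_k(m n^{2k-i-1})$ is then uniform in $i$. If $n\le m$, the claimed bound is $O(1)$ and is trivial by taking a single hyperedge or single vertex. Otherwise $np\ge c_k (n/m)^{1/(2k-1)}\ge c_k$, and adjusting constants makes the domination in Step 2 valid.
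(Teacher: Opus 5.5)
Your proposal is correct. The paper only cites this proposition without proof, and your argument is the standard proof of it: random sampling plus deletion, counting same-coloured pairs $e,f$ with $|e\cap f|=i$ as $O_k(mn^{2k-i-1})$ and taking $p\asymp (mn^{2k-2})^{-1/(2k-1)}$; the paper runs the same computation for $k=3$, $m=4$ in its alternative proof of Lemma~\ref{lem:triangles by a branch}. The one thing to fix is your parenthetical on non-complete $H$: deleting a vertex from every non-edge $k$-tuple would destroy the bound, and for an empty $H$ no $k$ vertices span a clique at all. So the statement should be read either for complete $H$, as in every application here, or with ``rainbow'' meaning only that the induced edges receive distinct colours, in which case your argument goes through verbatim.
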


\begin{lem} \label{lem:triangles by a branch}
Let $C$ be a branch of a hyperbola.
Then, any set of $N$ lines that are tangent to $C$ contains a subset of at least $\Omega (N^{1/5})$ lines for which all the triangles that they define have distinct area.
\end{lem}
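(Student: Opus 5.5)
The plan is to apply Proposition~\ref{prop:conlon} with $k=3$ to the complete $3$-uniform hypergraph whose vertices are the $N$ given tangent lines. Each triple is coloured by the area of the triangle it determines. If this colouring is $m$-good for an absolute constant $m$, Proposition~\ref{prop:conlon} gives a rainbow clique of size $\Omega((N/m)^{1/5})=\Omega(N^{1/5})$, which is exactly the claimed subset.

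First I will check that every triple of lines tangent to the branch $C$ determines a genuine triangle. After an affine change of coordinates, which multiplies all areas by the same constant and so does not affect distinctness, we may take $C=\{xy=1,\ x>0\}$. The tangent line at $(s,1/s)$ is $x/s+sy=2$, with normal direction $(1/s,s)$, so two tangents at $s\neq s'$ with $s,s'>0$ are never parallel. Moreover, no three tangents of a nondegenerate conic pass through a common point. Hence every triple determines a nondegenerate triangle, and the colouring is well defined on all triples.

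The main step is the $m$-goodness. Fix two of the lines, $l_1,l_2$, and an area $\lambda>0$. By Proposition~\ref{prop:hyperbola}, any third line $L$ forming a triangle of area $\lambda$ with $l_1,l_2$ is tangent to one of two fixed hyperbolas $C_1,C_2$ with asymptotes $l_1,l_2$; in our setting $L$ is also tangent to $C$. I will argue that $C_i\neq C$. Indeed, $l_1$ is tangent to $C$ and touches it at a finite point, whereas an asymptote of $C_i$ does not meet $C_i$ at all; so $l_1$ cannot be an asymptote of $C$. Two distinct nondegenerate conics have at most $4$ common tangent lines. This follows by passing to the dual conics in the projective dual plane, which are distinct conics and hence meet in at most $4$ points by B\'ezout. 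Therefore at most $8$ lines $L$ work for the given pair and area, and the colouring is $8$-good.

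I expect the only delicate point to be justifying that the dual conics are distinct and nondegenerate. This holds because duality is a bijection on smooth conics, but it must be stated carefully. As a backup, I would compute directly: the triangle of tangents at parameters $s_1,s_2,s_3$ has area $c\,\frac{|(s_1-s_2)(s_2-s_3)(s_3-s_1)|}{s_1s_2s_3(s_1+s_2)(s_2+s_3)(s_3+s_1)}$ for some constant $c$. Fixing $s_1,s_2,\lambda$ then gives a polynomial equation in $s_3$ of bounded degree that is not identically zero, which again bounds by an absolute constant the number of $s_3$ achieving area $\lambda$.
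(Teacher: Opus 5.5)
Your proposal is correct and is essentially the paper's first proof. You colour triples by area, use Proposition~\ref{prop:hyperbola} together with projective duality of conics to get bounded $m$-goodness, and apply Proposition~\ref{prop:conlon} with $k=3$. Your constant of $8$ (four common tangents for each of the two hyperbolas) instead of the paper's $4$ only changes the constant. Your checks that every triple forms a genuine triangle and that the hyperbola containing $C$ differs from each $C_i$ fill in details the paper leaves implicit.

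There is one small slip in your backup computation. For tangents at $(s_i,1/s_i)$ the area is $2\left|\frac{(s_1-s_2)(s_2-s_3)(s_3-s_1)}{(s_1+s_2)(s_2+s_3)(s_3+s_1)}\right|$, with no factor $s_1s_2s_3$ in the denominator. The conclusion you draw from it, a nonzero polynomial equation of degree at most $4$ in $s_3$, is unaffected.
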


\begin{proof}
Since every invertible affine transformation preserves ratios of areas and maps lines and conics to lines and conics, we may assume that $C$ is the branch
$C\colon xy=1$, $x>0$.

Fix two distinct tangent lines $l_1,l_2$ to $C$ and a real number $\lambda>0$. By Proposition~\ref{prop:hyperbola}, the set of all lines $l$ such that $l_1,l_2,l$ determine a triangle of area $\lambda$ is the set of tangent lines to a fixed hyperbola $\Gamma_{l_1,l_2,\lambda}$ whose asymptotes are $l_1$ and $l_2$.
We claim that $C$ and $\Gamma_{l_1,l_2,\lambda}$ have at most four common tangent lines. Indeed, in the dual plane, the sets of tangent lines to $C$ and to $\Gamma_{l_1,l_2,\lambda}$ are both conics, and two conics have at most four intersection points. Therefore, there are at most four tangent lines $l$ to $C$ such that $l_1,l_2,l$ determine a triangle of area $\lambda$.

Now let $\mathcal{L}$ be a family of $N$ tangent lines to $C$. Consider the complete $3$-uniform hypergraph on the vertex set $\mathcal{L}$, and colour each triple of distinct lines in $\mathcal{L}$ by the area of the triangle. By the above observation, this colouring is $4$-good. Hence, by Proposition~\ref{prop:conlon}, it contains a rainbow clique of size at least $\Omega((N/4)^{1/5})$.

\end{proof}

Lemma~\ref{lem:triangles by a branch} can also be proved using the following simple probabilistic argument.

\begin{proof}[Alternative proof of Lemma~\ref{lem:triangles by a branch}]
We can assume $C \colon xy=1$ with $x>0$.
Fix an arbitrary family $\mathcal{L}$ of $N$ tangent lines to $C$.
Let $\mathcal{L}_x \subset \mathbb{R}_{>0}$ be the set of the $x$-coordinates of the points of tangency of the lines in $\mathcal{L}$.
Then, for any subset $\mathcal{S} \subset \mathcal{L}$ with $|\mathcal{S}| \geq 4$, the condition that all triangles determined by $\mathcal{S}$ have distinct areas is equivalent to the following: for any two distinct (not necessarily disjoint) triples $I=\{i_1,i_2,i_3\}, \, J=\{j_1,j_2,j_3\} \in \binom{\mathcal{S}_x}{3}$, the triangle determined by $I$ and the triangle determined by $J$ have distinct areas,
where, $\mathcal{S}_x \subseteq \mathcal{L}_x$ is the set corresponding to $\mathcal{S}$.
Equivalently, if we define
$$
F(\{a,b,c\}) \coloneqq \Big(\frac{(a-b)(b-c)(c-a)}{(a+b)(b+c)(c+a)}\Big)^2,
$$
Then the condition is that $F(I) \neq F(J)$ for every two distinct triples $I$ and $J$.
Indeed, for $t>0$, let $\ell_t$ denote the tangent line to $C$ at the point $(t,1/t)$. Then the area of the triangle determined by $\ell_a,\ell_b,\ell_c$ is
$$ 
2 \Big|\frac{(a-b)(b-c)(c-a)}{(a+b)(b+c)(c+a)}\Big|.
 $$
We call a pair $(I,J)$ of distinct triples satisfying $F(I)=F(J)$ a \textit{bad pair}.

First, we count the number of bad pairs $(I,J)$ with $|I\cup J|=4$ in $\mathcal{L}_x$.
Fix $J$, and write $F(J)=K$ for some constant $K>0$. Let
$
I\cap J= \{u,v\}.
$
Then the remaining element ${x}=I\setminus J$ satisfies
$$
F(I)=\left(\frac{(x-u)(u-v)(v-x)}{(x+u)(u+v)(v+x)}\right)^2=K.
$$
Thus, there are at most four possible choices for $x$. Since there are $O(N^3)$ choices for $J$, the number of bad pairs of this type is at most $c_4 N^3$ for some constant $c_4>0$.
Similarly, it is not hard to see that the number of bad pairs $(I,J)$ with $|I\cup J|=5$ and $|I\cup J|=6$ in $\mathcal{L}_x$ is $c_2 N^4$ and $c_3 N^5$, respectively, for some constants $c_2, c_3 >0$.

Now, construct a random subset $\mathcal{R}_x\subset \mathcal{L}_x$ by choosing each element of $\mathcal{L}_x$ independently with probability $p\coloneqq cN^{-4/5}$, where $c>0$ will be chosen later.
Let $B$ be the number of bad pairs contained in $\mathcal{R}_x$.
The probability that a bad pair $(I,J)$ is contained in $\mathcal{R}_x$ is $p^{|I\cup J|}$.
Therefore, choosing $c>0$ to be sufficiently small so that $c_6 c^6 \leq c/4$ and taking $N$ sufficiently large, we have
$$
\mathbb{E}(B)= c_1 p^4 N^3 + c_2 p^5 N^4 + c_3 p^6 N^5 \leq cN^{1/5}/2.
$$
Hence,
$$
\mathbb{E}(|\mathcal{R}_x|-B) \geq cN^{-4/5} N-c N^{1/5}/2=cN^{1/5}/2.
$$
Thus, there exists a choice of $\mathcal{R}_x$ for which
$
|\mathcal{R}_x|-B \geq cN^{1/5}/2.
$
Repeatedly deleting one element from the union of a bad pair, we
obtain a subset $\mathcal{S}\subset \mathcal{L}_x$ of size at least $cN^{1/5}/2$ which contains no bad pair.
\end{proof}

\begin{proof}[Proof of Theorem \ref{lem:triangles by a branch}]
Let $\mathcal{L}$ be a family of $n$ lines in the plane in general position.
We may assume that $\mathcal{L}$ has no $m \coloneqq n^{1/2}$ lines tangent to a common branch of a hyperbola.
Indeed, otherwise, by Lemma~\ref{lem:triangles by a branch}, the branch $C$ admits a subset of $\Omega(m^{1/5})= \Omega(n^{1/10})$ tangent lines for which all the triangles they define have distinct areas.

Fix two lines $l_1,l_2\in \mathcal{L}$ and $\lambda>0$.
Let $\Lambda$ be a family of lines $l\in \mathcal{L}$ such that the triangle determined by $l_1,l_2,l$ has area $\lambda$.
Then each line of $\Lambda$ are tangent to one of two hyperbolas having $l_1$ and $l_2$ as asymptotes.
From the assumption and by Proposition~\ref{prop:hyperbola}, we have $|\Lambda|\leq 4m$.

Now colour the hyperedges of the complete $3$-uniform hypergraph with vertex set $\mathcal{L}$ according to the area of the triangle determined by each triple of lines.
Then this colouring is $4m$-good.
Hence, by Proposition~\ref{prop:conlon}, there exists a rainbow clique of size at least $n^{1/5}/(16m)^{1/5}=n^{1/10}/16^{1/5}$. This completes the proof.
\end{proof}

\section{Proof of Theorem~\ref{thm:betterboundforD_3}}

For a family of planes $\mathcal{H}$ in general position in $\mathbb{R}^3$, we call a subfamily $\mathcal{S}$ \textit{a distinct volume subset of $\mathcal{H}$} if the volumes of the $\binom{|\mathcal{S}|}{4}$ tetrahedra determined by $\mathcal{S}$ are all distinct.
Also, define $D_3 (\mathcal{H})$ to be the maximum size of a distinct volume subset of $\mathcal{H}$.
For a plane $H: ax + by + cz + d = 0$ in $\mathbb{R}^3$, the point $[a:b:c:d]$ in $\mathbb{RP}^3$ is uniquely determined. We call this \textit{the dual point of $H$} and denote it by $H^{*}$. Also, for a family of planes $\mathcal{H}$ in $\mathbb{R}^3$, we denote the set of their dual points by $\mathcal{H}^{*}$.
Furthermore, we denote by $V(\{ H_1, H_2, H_3, H_4 \})$ the volume of the tetrahedron determined by a family $\{ H_1, H_2, H_3, H_4 \}$ of $4$ planes in general position.
A surface $S$ in $\mathbb{R}^3$ is called a \textit{reciprocal cubic surface} if
$S$ can be mapped to the surface $\Sigma: xyz=1$ by an invertible affine transformation.
For a family of planes $\mathcal{H}$ in $\mathbb{R}^3$ in general position, define
$$
\Gamma := \max_{\mathcal{T} \in \binom{\mathcal{H}}{3}, \lambda > 0} | \{ H \in \mathcal{H} \backslash \mathcal{T} : V(\mathcal{T} \cup \{H \} ) = \lambda \}
$$
and call it \textit{the degree of $\mathcal{H}$}.
The following is a natural generalization of Proposition~\ref{prop:hyperbola}.
\begin{prop}[\cite{furukawa2025simplex}] \label{tangent plane}
Let $H_1,H_2,H_3$ be three planes in $\mathbb{R}^3$ meeting at a unique point, and fix $\lambda>0$. Then there exist two disjoint fixed reciprocal cubic surfaces $S_{+}$ and $S_{-}$ such that every plane $H$ that forms a tetrahedron with $H_1, H_2$ and $H_3$ of volume $\lambda$ is tangent to one of $S_{+}$ and $S_{-}$. Moreover, both $S_{+}$ and $S_{-}$ have $H_1,H_2,H_3$ as asymptotic planes.
\end{prop}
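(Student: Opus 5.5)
The plan is to reduce, by an affine change of coordinates, to the case where $H_1,H_2,H_3$ are the coordinate planes, and then to compute everything explicitly using intercepts. Since $H_1,H_2,H_3$ meet in a unique point, their normal vectors are linearly independent. So there is an invertible affine map $A$ sending $H_1,H_2,H_3$ to the planes $x=0$, $y=0$, $z=0$. The map $A$ multiplies all volumes by the constant $|\det A|$, sends planes to planes, and preserves tangency. It also sends reciprocal cubic surfaces to reciprocal cubic surfaces, because the class is defined up to invertible affine maps. Hence it suffices to prove the statement for the coordinate planes, with $\lambda$ replaced by $\lambda' := |\det A|\,\lambda$, and then pull back by $A^{-1}$. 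Asymptotic planes are also carried along by $A^{-1}$.

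In the normalized picture, consider a plane $H$ that forms a nondegenerate tetrahedron with the coordinate planes. It cannot pass through the origin and cannot be parallel to any coordinate axis, so it can be written as $x/a+y/b+z/c=1$ with $abc\neq 0$. The tetrahedron then has vertices $0,(a,0,0),(0,b,0),(0,0,c)$, so its volume is $|abc|/6$. Thus $V=\lambda'$ is equivalent to $abc=\pm 6\lambda'$. Next I compute the tangent planes of $xyz=k$ for $k\neq0$. At a point $(x_0,y_0,z_0)$ of this surface the tangent plane is
\[
y_0z_0\,x+x_0z_0\,y+x_0y_0\,z=3k,
\]
which has intercepts $a=3x_0$, $b=3y_0$, $c=3z_0$. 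Hence its intercept product is $abc=27x_0y_0z_0=27k$.

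Conversely, take any plane with intercepts satisfying $abc=27k$. The point $(a/3,b/3,c/3)$ lies on $xyz=k$, and by the formula above the tangent plane there is exactly $x/a+y/b+z/c=1$. Therefore the planes forming a tetrahedron of volume $\lambda'$ with the coordinate planes are precisely the tangent planes of
\[
S_{\pm}\colon\ xyz=\pm\tfrac{2\lambda'}{9}.
\]
These two surfaces are disjoint level sets of $xyz$. Each is an affine image of $\Sigma\colon xyz=1$: use the scaling $x\mapsto (2\lambda'/9)^{1/3}x$ on each coordinate, composed with $x\mapsto -x$ for $S_-$. Each has the three coordinate planes as asymptotic planes, since $xyz=k$ forces $|z|\to\infty$ as $x\to0$ with $y$ bounded away from $0$, and symmetrically in the other variables.

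The only step needing care is the bookkeeping in the reduction: the volume rescaling by $|\det A|$, and the sign of $abc$, which is what produces two surfaces rather than one. I should also make sure the phrase ``tangent to $S_\pm$'' is read as tangent at some point of one of the several connected sheets of $xyz=k$. The explicit converse above guarantees that every such plane really arises as a tangent plane, so no plane is lost. With these points checked, the computation is direct.
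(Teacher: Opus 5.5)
Your argument is correct and complete. The paper does not prove this proposition itself but imports it from \cite{furukawa2025simplex}. Your route is the natural three-dimensional analogue of the intercept computation behind Proposition~\ref{prop:hyperbola}: affinely normalize $H_1,H_2,H_3$ to the coordinate planes, rescaling to $\lambda'=|\det A|\lambda$, then observe that the tangent planes of $xyz=k$ are exactly the planes with intercept product $abc=27k$, which gives $S_\pm\colon xyz=\pm 2\lambda'/9$. In fact you prove slightly more than stated: after pulling back by $A^{-1}$, the planes forming a tetrahedron of volume $\lambda$ with $H_1,H_2,H_3$ are \emph{exactly} the tangent planes of $S_+\cup S_-$.
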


The following lemma is the key for the proof of Theorem~\ref{thm:betterboundforD_3} and also the $3$-dimensional version of Lemma~\ref{lem:triangles by a branch}.
\begin{lem} \label{lem:surface}
Let $S \subset \mathbb{R}^3$ be a reciprocal cubic surface. Then any family of $N$ tangent planes to $S$ in general position contains a distinct volume subset of size $\Omega (N^{1/14})$.
\end{lem}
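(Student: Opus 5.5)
The plan is to follow the proof of Theorem~\ref{thm:D_2}, but carried out one dimension up and inside the dual space $\mathbb{RP}^3$. Since affine maps preserve ratios of volumes, tangency and general position, we may assume $S=\Sigma\colon xyz=1$. The tangent plane at $(a,b,1/(ab))$ is $x/a+y/b+abz=3$. Hence the dual points of tangent planes to $\Sigma$ lie on a fixed algebraic surface $\Sigma^{*}\subset\mathbb{RP}^3$ of bounded degree; this is the dual surface, with equation of the form $uvw=\mathrm{const}\cdot d^3$ after clearing signs.

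Fix a triple $\mathcal{T}=\{H_1,H_2,H_3\}$ of our planes and a value $\lambda>0$. By Proposition~\ref{tangent plane}, every $H$ with $V(\mathcal{T}\cup\{H\})=\lambda$ is tangent to $S_+$ or $S_-$. So $H^{*}\in\Sigma^{*}\cap(S_+^{*}\cup S_-^{*})$, where $S_\pm^{*}$ are again bounded-degree surfaces, affine images of $\Sigma^{*}$. I will first check that $\Sigma^{*}$ is not a component of $S_\pm^{*}$. Equivalently, $\Sigma$ itself does not satisfy the volume-$\lambda$ condition for all of its tangent planes, which is clear from the explicit volume formula: the volume tends to $0$ or $\infty$ as the tangency point degenerates. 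By B\'ezout, $\Sigma^{*}\cap S_\pm^{*}$ is then a union of at most $C$ irreducible algebraic curves of degree at most $C$, for an absolute constant $C$. Thus for each $(\mathcal{T},\lambda)$, the candidate fourth planes have dual points lying on $O(1)$ bounded-degree curves $\gamma\subset\Sigma^{*}$.

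Next comes a dichotomy with threshold $m:=N^{1/2}$. In the first case, no irreducible curve of degree at most $C$ contains $m$ of the dual points. Then the degree $\Gamma$ of the family is $O(m)$, so colouring $4$-sets by volume gives an $O(m)$-good colouring of the complete $4$-uniform hypergraph. Proposition~\ref{prop:conlon} with $k=4$ then yields a distinct volume subset of size $\Omega((N/m)^{1/7})=\Omega(N^{1/14})$.

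In the second case, some such curve $\gamma$ contains a set $\mathcal{L}$ of at least $m$ of our planes. I then work only inside $\mathcal{L}$, mimicking Lemma~\ref{lem:triangles by a branch}. For a triple $\mathcal{T}\subset\mathcal{L}$ and $\lambda>0$, the admissible fourth planes in $\mathcal{L}$ have dual points in $\gamma\cap(S_\pm^{*})$. By B\'ezout this intersection has $O(1)$ points, unless $\gamma\subset S_+^{*}\cup S_-^{*}$. If it is always finite, the colouring on $\mathcal{L}$ is $O(1)$-good, and Proposition~\ref{prop:conlon} gives $\Omega(m^{1/7})=\Omega(N^{1/14})$, as required.

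The main obstacle is the degenerate subcase: some curve $\gamma$ through at least $m$ points of $\mathcal{L}$ is entirely contained in $S_\pm^{*}$ for some triple $\mathcal{T}$ and some $\lambda$. In that subcase all planes of $\gamma$ give the same volume with $\mathcal{T}$. I would first try to rule this out algebraically. Parametrize $\gamma$ locally by tangency points $(a(s),b(s))$ and write $V(\mathcal{T}\cup\{H_s\})$ as an explicit rational function of $(a,b)$, computed like the triangle formula $2|(a-b)(b-c)(c-a)/((a+b)(b+c)(c+a))|$. The aim is to show that this function cannot be constant along an irreducible curve passing through the three tangency points of $\mathcal{T}$: near those points the fourth plane degenerates towards one of $H_1,H_2,H_3$, so the volume tends to $0$. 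Note that $\mathcal{T}\subset\mathcal{L}\subset\gamma$ in this subcase, since the triple is taken from $\mathcal{L}$.

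If that argument fails for some exceptional curves, there is a fallback. Since only $O(1)$ curves per $(\mathcal{T},\lambda)$ are involved, I would show that each bounded-degree curve can be contained in $S_\pm^{*}$ for only $O(1)$ values of $\lambda$ per triple. I would then either restore $O(1)$-goodness by a random sparsification as in the alternative proof of Lemma~\ref{lem:triangles by a branch}, or iterate the dichotomy once more with adjusted thresholds.
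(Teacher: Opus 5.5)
\textbf{There is a genuine gap: the degenerate subcase of the curve step is not handled.} Your reduction is essentially the paper's: few fourth planes per $(\mathcal{T},\lambda)$ gives $\Omega((N/\Gamma)^{1/7})$, and otherwise many dual points lie on one bounded-degree irreducible component of $\Sigma^*\cap S_\pm^*$. The bookkeeping with $m=N^{1/2}$ is fine. The gap is in the curve step, in exactly the degenerate subcase you flag, and your proposed exclusion of it rests on a false claim.

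Put $H_1,H_2,H_3$ in coordinate position and write $H=\{ux+vy+wz+d=0\}$. Then $V(\mathcal{T}\cup\{H\})=|d|^3/(6|uvw|)$, and $S_\pm^*$ is $27c\,uvw+d^3=0$ with $c=\pm 2\lambda/9$. This surface \emph{contains} $H_1^*=[1:0:0:0]$ as a singular point with tangent cone $\{v=0\}\cup\{w=0\}$. Near $H_1^*$ the volume is $|d|^3/(6|vw|)$. It tends to $0$ along a generic approach, which is why your check $\Sigma^*\nsubseteq S_\pm^*$ is fine. It need not tend to $0$ along curves tangent to $\{w=0\}$ or $\{v=0\}$, i.e.\ when $\lim(H\cap H_1)$ is parallel to $H_1\cap H_2$ or to $H_1\cap H_3$. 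Then one vertex of the tetrahedron runs off to infinity while another tends to the apex.

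Now $\Sigma^*$ is smooth at $H_1^*$, and its tangent plane there (the planes through the contact point of $H_1$) does not contain the line $v=w=0$. Hence the curve $\Sigma^*\cap S_\pm^*$, a level set of the volume on $\Sigma^*$, has a real node at $H_1^*$. Both branches consist of genuine common tangent planes with $V\equiv\lambda$. So curves of the degenerate subcase reach the $H_i^*$ exactly along these exceptional directions, and no local argument at the $H_i^*$ excludes them. In the plane the formula is $d^2/(2|uv|)$, and the only bad direction at $l_1^*$ (lines parallel to $l_1$) is never tangent to the dual conic of $C$. In space the bad directions form two planes, each meeting $T_{H_1^*}\Sigma^*$ in a line, so the planar intuition does not transfer.

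\textbf{The fallback does not address the real obstruction.} For fixed $\mathcal{T}$ the volume is a single rational function on $\gamma$, so at most one $\lambda$ is bad anyway. What breaks $O(1)$-goodness is the \emph{number} of exceptional triples, and nothing in your proposal controls it. The same number also breaks the bad-pair count of a random sparsification, since each exceptional triple yields $\Theta(m^2)$ bad pairs on five vertices.

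\textbf{The missing idea is the paper's proof of Lemma~\ref{lem:curve}, which never excludes exceptional triples.}
\begin{enumerate}
\item By Observation~\ref{lem:redclique}, no five planes have all tetrahedra of equal volume. So the exceptional triples contain no complete $3$-graph on five vertices.
\item Hypergraph Ramsey then gives a constant-size set free of exceptional triples. On it the colouring is $O(1)$-good by B\'ezout, which yields a distinct volume subset of size at least $8$.
\item Take a \emph{maximum} distinct volume subset $\mathcal{M}\subseteq\mathcal{L}$, with $|\mathcal{M}|=m'\ge 8$. Every $H\in\mathcal{L}\setminus\mathcal{M}$ creates a coincidence $V(\mathcal{T}\cup\{H\})=V(\mathcal{Q})$ or $V(\mathcal{T}_1\cup\{H\})=V(\mathcal{T}_2\cup\{H\})$, with $\mathcal{T},\mathcal{Q},\mathcal{T}_1,\mathcal{T}_2\subseteq\mathcal{M}$.
\item The curve $\gamma$ cannot lie in the zero set of the corresponding polynomial (of degree at most $12$). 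Otherwise any plane of $\mathcal{M}$ outside the at most seven planes involved would produce a coincidence inside $\mathcal{M}$.
\item B\'ezout then gives $O(1)$ planes per coincidence. Hence $|\mathcal{L}|-m'=O((m')^{7})$, so $m'=\Omega(|\mathcal{L}|^{1/7})=\Omega(N^{1/14})$.
\end{enumerate}
The non-degeneracy you tried to prove geometrically is supplied by the distinctness of $\mathcal{M}$ itself.
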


\begin{proof}[Proof of Theorem~\ref{thm:betterboundforD_3}]
Let $\mathcal{H}$ be a family of $n$ planes in $\mathbb{R}^3$ in general position.
It suffices to show that $\mathcal{H}$ contains a distinct volume subset of size $\Omega (n^{1/21})$.
Let $\Gamma$ be the degree of $\mathcal{H}$. 
We colour each hyperedge of the $4$-uniform complete hypergraph with vertex set $\mathcal{H}$ by the volume of the tetrahedron it defines. Then, this colouring is $\Gamma$-good.
 Hence, by Proposition~\ref{prop:conlon}, there exists a rainbow clique of size $\Omega ((n/\Gamma)^{1/7})$. 
 Observing that a rainbow clique corresponds to a distinct volume subset, we have
$$
D_3 (n) = \Omega ((n/\Gamma)^{1/7}).
$$

Now, take $\mathcal{T} = \{ H_1, H_2, H_3 \} \in \binom{\mathcal{H}}{3}$ and $\lambda = \lambda_0 > 0$, achieving $\Gamma$. Then, by Proposition~\ref{tangent plane}, 
all $H \in \mathcal{H} \backslash \mathcal{T}$ satisfying $V(\mathcal{T} \cup \{H \} ) = \lambda_0$ are tangent to one of the two reciprocal cubic surfaces $S_{+}, S_{-}$. Without loss of generality, we may assume that at least $\lceil \Gamma/2 \rceil$ planes in $ \mathcal{H} \backslash \mathcal{T}$ are tangent to $S_{+}$. Then, by Lemma~\ref{lem:surface}, $\mathcal{H} \backslash \mathcal{T}$ contains a distinct volume subset of size $\Omega (\lceil \Gamma/2 \rceil^{1/14})$. Therefore,
$$
D_3 (n) \geq \Omega (\max \{ (n/\Gamma)^{1/7}, \Gamma^{1/14} \}) \geq \Omega (n^{1/21}).
$$
\end{proof}

We now prove Lemma~\ref{lem:surface}. For this, we need the following lemma.

\begin{lem} \label{lem:curve}
Let $\mathcal{H}$ be a family of $N$ planes in $\mathbb{R}^3$ in general position such that $\mathcal{H}^{*}$ is contained in an irreducible algebraic curve $C \subset \mathbb{RP}^3$ of degree at most $O(1)$. Then $\mathcal{H}$ contains a distinct volume subset of size $\Omega (N^{1/7})$.
\end{lem}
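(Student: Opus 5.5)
The plan is to follow the proof of Lemma~\ref{lem:triangles by a branch}: show that the volume colouring of the complete $4$-uniform hypergraph on $\mathcal{H}$ is $O(1)$-good, and then apply Proposition~\ref{prop:conlon} with $k=4$. That proposition yields a rainbow clique of size $\Omega((N/O(1))^{1/7})=\Omega(N^{1/7})$, and a rainbow clique is exactly a distinct volume subset. So the whole task is the following bound. Fix three planes $\mathcal{T}=\{H_1,H_2,H_3\}\subset\mathcal{H}$ and $\lambda>0$; we must show that at most $O(1)$ planes $H\in\mathcal{H}\setminus\mathcal{T}$ satisfy $V(\mathcal{T}\cup\{H\})=\lambda$.

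To get this bound I will write the volume explicitly in dual coordinates. For planes $H_i^*=[a_i:b_i:c_i:d_i]$, $i=1,\dots,4$, let $\Delta$ be the $4\times4$ determinant of the coefficient vectors, and let $M_j$ be the $3\times 3$ minor of the normal vectors obtained by omitting plane $j$. The standard formula is
\[
V(\{H_1,\dots,H_4\})=\frac{|\Delta|^3}{6\,|M_1M_2M_3M_4|}.
\]
This expression is homogeneous of degree $0$ in each $H_i^*$, so it is well defined on $\mathbb{RP}^3$. With $H_1,H_2,H_3$ fixed and $H^*=[a:b:c:d]$ varying, $\Delta$ is a linear form $\ell(H^*)$, $M_4$ is a nonzero constant, and $M_1,M_2,M_3$ are linear forms in $(a,b,c)$. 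Hence the condition $V=\lambda$ is equivalent to
\[
\ell(H^*)^6-36\lambda^2M_4^2\,(M_1M_2M_3)^2=0,
\]
a homogeneous polynomial equation of degree $6$ in $H^*$. Call its zero set $Z_{\mathcal{T},\lambda}\subset\mathbb{RP}^3$; it is a sextic surface, or possibly contains such a surface.

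Next I intersect $Z_{\mathcal{T},\lambda}$ with the curve $C$. By Bézout, if $C\not\subset Z_{\mathcal{T},\lambda}$, then $|C\cap Z_{\mathcal{T},\lambda}|\le 6\deg C=O(1)$, which gives the required $O(1)$-goodness. The main obstacle is the remaining case $C\subset Z_{\mathcal{T},\lambda}$, where every plane dual to a point of $C$ has the same volume with $\mathcal{T}$. My first attempt is to show that this happens for only $O(1)$ values of $\lambda$ for any given $\mathcal{T}$. Indeed, the function $\lambda\mapsto$ (volume along $C$) is a rational function on $C$, so $C\subset Z_{\mathcal{T},\lambda}$ forces that function to be constant on $C$. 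Then, since $C$ is irreducible, $C$ lies in $Z_{\mathcal{T},\lambda}$ for at most one value $\lambda=\lambda_0(\mathcal{T})$.

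To handle $\lambda_0$, I will argue that for such a $\mathcal{T}$, every $4$-tuple containing $\mathcal{T}$ has volume $\lambda_0$. I would then try to rule this out, or bound how often it happens, as follows. As $H^*$ moves along $C$ toward the plane through the point $H_1\cap H_2\cap H_3$, the volume tends to $0$, because $\ell(H^*)\to0$ while the minors stay bounded away from zero generically. This contradicts constancy unless $C$ misses the hyperplane $\{\ell=0\}$ in $\mathbb{RP}^3$, which is impossible since a projective curve meets every hyperplane. The subtle point is that the intersection might only occur at points where the $M_i$ also vanish, or at complex points. If that happens, I would fall back on deleting from the hypergraph the few edges of such exceptional triples, or on applying Proposition~\ref{prop:conlon} to a random subfamily in which exceptional triples do not occur. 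With $O(1)$-goodness established, the bound $\Omega(N^{1/7})$ follows.
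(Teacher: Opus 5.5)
Your plan needs the volume colouring of $\binom{\mathcal{H}}{4}$ to be $O(1)$-good, and that is false in general. The case $C\subset Z_{\mathcal{T},\lambda}$ (an \emph{exceptional} triple) genuinely occurs, and a single such triple gives $N-3$ hyperedges of one colour through $\mathcal{T}$.

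For example, let $H_1,H_2,H_3$ be the coordinate planes. The plane $ax+by+cz+d=0$ has intercepts $-d/a,-d/b,-d/c$, so its volume with $\mathcal{T}$ is $|d|^3/(6|abc|)$, and $Z_{\mathcal{T},\lambda}$ contains the cubic surface $d^3=6\lambda abc$. Put $\kappa^3=6\lambda$. Let $C$ be the image of a real irreducible conic through the three coordinate points of $\mathbb{RP}^2$ under the morphism $[u:v:w]\mapsto[u^3:v^3:w^3:\kappa uvw]$. Then $C$ is irreducible of degree at most $6$, lies on that surface, and passes through $H_1^*,H_2^*,H_3^*$ (the images of the coordinate points). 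For $uvw\neq 0$ the plane $u^3x+v^3y+w^3z+\kappa uvw=0$ has intercept product $-\kappa^3$. So $H_1,H_2,H_3$ together with generic points of $C$ satisfy all the hypotheses, yet every $H\in\mathcal{H}\setminus\mathcal{T}$ gives volume exactly $\lambda$. Here $C\cap\{\ell=0\}=C\cap\{d=0\}$ consists precisely of $H_1^*,H_2^*,H_3^*$, where $abc$ also vanishes. So your limiting argument fails exactly at the point you flagged. Your correct observation that each $\mathcal{T}$ has at most one bad $\lambda$ does not help.

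Your fallbacks do not close the gap either. Proposition~\ref{prop:conlon} concerns colourings of the complete hypergraph, so discarding edges through exceptional triples really means finding a large subfamily spanning no exceptional triple. Random sampling preserves the exponent $1/7$ only if there are $O(N)$ exceptional triples, and you give no bound on their number. The only general constraint, Observation~\ref{lem:redclique}, says that no five planes have all ten triples exceptional, and via hypergraph Ramsey this guarantees far too small independent sets.

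The paper avoids global goodness altogether, in two steps.
\begin{itemize}
\item It uses exactly that Ramsey argument (red $=$ exceptional; inside a blue clique the colouring is $O(1)$-good) only to obtain a distinct volume subset of size at least $8$.
\item It then takes a \emph{maximum} distinct volume subset $\mathcal{M}$ with $m=|\mathcal{M}|$. Each $H\notin\mathcal{M}$ satisfies $V(\mathcal{T}\cup\{H\})=V(\mathcal{Q})$ or $V(\mathcal{T}_1\cup\{H\})=V(\mathcal{T}_2\cup\{H\})$ for some $\mathcal{T},\mathcal{Q},\mathcal{T}_1,\mathcal{T}_2\subset\mathcal{M}$. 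Each such condition is a polynomial equation of degree at most $12$ in $H^*$. It cannot vanish identically on $C$: otherwise a plane of $\mathcal{M}$ outside the at most seven planes involved would violate distinctness within $\mathcal{M}$.
\end{itemize}
B\'ezout then bounds each condition by $O(1)$ planes, so $N-m=O(m^7)$ and $m=\Omega(N^{1/7})$. The idea your proposal is missing is this self-certification of nondegeneracy by the maximal set $\mathcal{M}$.
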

First, assuming this lemma, we prove Lemma~\ref{lem:surface}. Lemma~\ref{lem:curve} will be proved at the end.
\begin{proof}[Proof of Lemma \ref{lem:surface}]
Let $\mathcal{A}$ be a family of planes satisfying the assumptions of the statement. Let $\Gamma_0$ be the degree of $\mathcal{A}$.
Then, by Proposition~\ref{prop:conlon}, $D_3 (\mathcal{A}) = \Omega ((N/\Gamma_0)^{1/7})$.
Take $\{ A_1, A_2, A_3 \} \subset \mathcal{A}$ and $\lambda_0 > 0$ achieving $\Gamma_0$.
Then, by Proposition~\ref{tangent plane}, every $A \in \mathcal{A} \backslash \{ A_1, A_2, A_3 \}$ satisfying $V( \{ A_1, A_2, A_3 \} \cup \{A \} ) = \lambda$ is tangent to one of two disjoint reciprocal cubic surfaces $S_1, S_2$ having $A_1, A_2, A_3$ as asymptotic planes. Thus, without loss of generality, we may assume that there is a subfamily $\mathcal{A'}$ of $\mathcal{A} \backslash \{ A_1, A_2, A_3 \}$ of size $\lceil \Gamma_0/2 \rceil$ such that every plane in $\mathcal{A'}$ is tangent to $S_1$.

Since the planes $A_1, A_2, A_3$ are asymptotic planes of $S_1$, while none of them is asymptotic planes of $S$, we have $S \neq S_1$. 
Moreover, $S^{*}$ is also a surface. 
Thus, $S^{*}$ and $S_1^{*}$ have no common component, and hence
the intersection $S^{*} \cap S_1^{*}$ is an algebraic curve in $\mathbb{RP}^3$ of degree $O(1)$.
Furthermore, since every plane in $\mathcal{A'}$ is tangent to both $S$ and $S_1$,
we have $\mathcal{A'}^{*} \subset S^{*} \cap S_1^{*}$.
The curve $S^{*} \cap S_1^{*}$ can be decomposed into at most $O(1)$ irreducible components, so by the pigeonhole principle and Lemma~\ref{lem:curve},
we obtain $D_3 (\mathcal{A'}) = \Omega \left(\left(\frac{\lceil \Gamma_0 /2 \rceil}{O(1)}\right)^{1/7}\right) = \Omega (\Gamma_0^{1/7})$.
Therefore,
$$
D_3 (\mathcal{A}) \geq \Omega (\max \{ (N/\Gamma_0)^{1/7}, \Gamma_0^{1/7} \}) \geq \Omega (N^{1/14}).
$$
\end{proof}

\begin{proof}[Proof of Lemma \ref{lem:curve}]
Let $\mathcal{H}$ be a family of planes satisfying the assumptions of the statement.
We first show that $D_3(\mathcal{H}) \geq 8$ provided that $N$ is sufficiently large, which will be needed later.
For $\mathcal{T} = \{ H_k, H_l, H_m \}$, where $H_i: a_i x + b_i y + c_i z + d_i = 0 \, (i \in \{ k, l, m \})$, consider a plane $H: ax + by +cz + d = 0$ such that $\mathcal{T} \cup \{ H \}$ is in general position.
$$
\Delta_{klm}=
\det\!\begin{pmatrix}
a_k & b_k & c_k \\
a_l & b_l & c_l \\
a_m & b_m & c_m
\end{pmatrix},
\,
\Delta_{0klm}=
\det\!\begin{pmatrix}
a_k & b_k & c_k & d_k \\
a_l & b_l & c_l & d_l \\
a_m & b_m & c_m & d_m \\
a & b & c & d
\end{pmatrix},
\,
\Delta_{0ij}=
\det\!\begin{pmatrix}
a_i & b_i & c_i \\
a_j & b_j & c_j \\
a & b & c
\end{pmatrix}
\, (i, j \in \{ k, l, m \})
$$
Then the square of the volume of the tetrahedron determined by $\mathcal{T} \cup \{ H \}$ is given by
$$
V(\mathcal{T} \cup \{ H \})^2 = \frac{\Delta_{0klm}^6}{36 (\Delta_{klm} \Delta_{0kl} \Delta_{0km} \Delta_{0lm})^2}.
$$
Let
$$
P_\mathcal{T} [a:b:c:d] := \Delta_{0klm}^6, \quad Q_\mathcal{T} [a:b:c:d] := 36 (\Delta_{klm} \Delta_{0kl} \Delta_{0km} \Delta_{0lm})^2.
$$
We call $\mathcal{T} \in \binom{\mathcal{H}}{3}$ \textit{exceptional} if there exists a fixed constant $\lambda > 0$ such that
$V(\mathcal{T} \cup \{H \})^2 = \lambda$
for every $H \in \mathcal{H} \backslash \mathcal{T}$.
For fixed $\mathcal{T} = \{ H_1, H_2, H_3 \} \in \binom{\mathcal{H}}{3}$ and $\lambda > 0$, define
$$
F_{\mathcal{T}, \lambda} := P_\mathcal{T} [a:b:c:d] - \lambda Q_\mathcal{T} [a:b:c:d].
$$
Then $F_{\mathcal{T}, \lambda}$ is a homogeneous polynomial of degree at most $6$ on $\mathbb{RP}^3$.
Every plane $H: ax + by +cz + d = 0$ satisfying
$V(\mathcal{T} \cup \{ H \})^2 = \lambda$
also satisfies $F_{\mathcal{T}, \lambda} [a:b:c:d] = 0$.
Consider the complete $3$-uniform hypergraph with vertex set $\mathcal{H}$, and colour an edge $\mathcal{T} \in \binom{\mathcal{H}}{3}$ red if $\mathcal{T}$ is exceptional and blue otherwise.
Suppose that $\mathcal{R} \subseteq \mathcal{H}$ is a red clique. Then all tetrahedra determined by the $4$-subsets of $\mathcal{R}$ have the same volume.
Since the following observation holds, we must then have $|\mathcal{R}| \leq 4$.
\begin{obs} \label{lem:redclique}
The tetrahedra determined by a family of $5$ planes in general position in $\mathbb{R}^3$ cannot all have the same volume.
\end{obs}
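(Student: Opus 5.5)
The plan is to normalise four of the five planes by an affine map, write down the five volumes explicitly, and show that the equal-volume equations cannot hold simultaneously. Since an invertible affine map multiplies all volumes by the same factor and preserves general position, it is enough to refute the claim in a convenient frame.

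\textbf{Normalisation.} Four planes in general position bound a tetrahedron, and an affine map sends it to the standard simplex. So we take
\[
H_1\colon x=0,\quad H_2\colon y=0,\quad H_3\colon z=0,\quad H_4\colon x+y+z=1 .
\]
Write the fifth plane as $H_5\colon ax+by+cz=d$. General position forces $a,b,c,d\neq 0$. It also forces $a,b,c$ to be pairwise distinct, and $d$ to differ from each of $a,b,c$; these are exactly the nonvanishing of the relevant $3\times 3$ and $4\times 4$ minors.

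\textbf{The five volumes.} We use the formula
\[
V=\frac{|\Delta_{0klm}|^3}{6\,|\Delta_{klm}\Delta_{0kl}\Delta_{0km}\Delta_{0lm}|},
\]
obtained from the expression for $V(\mathcal T\cup\{H\})^2$ displayed in the proof of Lemma~\ref{lem:curve}.
\begin{itemize}
\item $V(H_1,H_2,H_3,H_4)=1/6$.
\item $V(H_1,H_2,H_3,H_5)=|d|^3/(6|abc|)$.
\item For $\{H_1,H_2,H_4,H_5\}$, the $4\times 4$ determinant is $\pm(c-d)$ and the four normal minors are $1,\ c,\ c-b,\ a-c$. Hence
\[
V=\frac{|c-d|^3}{6\,|c(c-a)(c-b)|}.
\]
\item By the symmetry of the normalisation, the tetrahedra omitting $H_2$ and $H_1$ have the analogous volumes with $c$ replaced by $b$ and by $a$, respectively.
\end{itemize}
Scaling $H_5$'s equation, we may take $d=1$. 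Writing $p(t)=(t-a)(t-b)(t-c)$, equality of all five volumes becomes
\[
|abc|=1,\qquad |1-u|^3=|u\,p'(u)| \quad (u\in\{a,b,c\}).
\]

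\textbf{Deriving a contradiction.} The main obstacle is to show this system has no real solution with $a,b,c$ distinct and nonzero and $a,b,c\neq 1$. My first attempt will use the linear dependence $\sum_{i=1}^5\mu_i v_i=0$ among the coefficient vectors $v_i\in\mathbb R^4$. In our frame
\[
\mu=(a-1,\ b-1,\ c-1,\ 1,\ -1)
\]
up to scaling, after absorbing $d=1$. I expect each signed volume $V_i$ (the tetrahedron omitting $H_i$) to be expressible as
\[
V_i=\frac{\mu_i^3}{6\prod_{j\neq i}\mu_j}\cdot(\text{common factor}),
\]
up to sign. Equality of all $|V_i|$ would then force $|\mu_i|^4$ to be constant in $i$. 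Since $\mu_4=1$ and $\mu_5=-1$, each $\mu_i\in\{\pm1\}$, so $a,b,c\in\{0,2\}$. This contradicts either $abc\neq0$ or the distinctness of $a,b,c$.

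If that clean formula does not come out, the fallback is a direct case analysis. We split according to the signs of $u$, $1-u$ and $p'(u)$ (equivalently, the order of $a,b,c$ relative to $0$ and $1$). In each case we drop the absolute values, which leaves a polynomial system. We then combine the three equations (for instance, by summing $u\,p'(u)$ over the roots and using Vieta's formulas) and show that it is inconsistent with $abc=\pm1$.
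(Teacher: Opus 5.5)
Your normalisation and the five volume formulas are correct, but the step that is supposed to do the work fails.

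\textbf{The key formula is false.} The \emph{clean formula} $V_i=\frac{\mu_i^3}{6\prod_{j\ne i}\mu_j}\cdot(\text{common factor})$ cannot hold. In your own frame $|\mu_4|=|\mu_5|=1$, so it predicts $V(H_1,H_2,H_3,H_5)=V(H_1,H_2,H_3,H_4)$ for every $H_5$. You computed these as $1/(6|abc|)$ and $1/6$, which differ, e.g.\ for $(a,b,c)=(2,3,-1)$. The structural reason is that the volumes depend on the $3\times 3$ minors of the normal vectors, which $\mu$ does not determine. Moreover $\mu$ is not an invariant of the planes: multiplying the equation of $H_i$ by $s_i$ replaces $\mu_i$ by $\mu_i/s_i$, which changes $|\mu_i|^4/|\mu_j|^4$ but not the volumes. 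So the conclusion $a,b,c\in\{0,2\}$ has no basis.

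\textbf{The fallback does not repair this.} It announces a sign case split but gives no argument that the overdetermined system $|abc|=1$, $|1-u|^3=|u\,p'(u)|$ has no admissible solution. The suggested combination $\sum_u u\,p'(u)$ with Vieta's formulas has no evident connection to these equations. Since that inconsistency is the whole content of the observation, nothing has yet been proved.

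\textbf{What is missing} is a linear relation among the five volumes. With $d=1$, the function $(t-1)^3/(t\,p(t))$ has simple poles at $0,a,b,c$ and behaves like $1/t$ at infinity, so its residues sum to $1$:
\[
\frac{1}{abc}+\sum_{u\in\{a,b,c\}}\frac{(u-1)^3}{u\,p'(u)}=1 .
\]
By your own formulas, the four terms on the left are $\pm 6$ times the volumes of the tetrahedra omitting $H_4,H_1,H_2,H_3$. The right-hand side is $6V(H_1,H_2,H_3,H_4)$. If all five volumes were equal, each term on the left would be $\pm1$, so the left side would be an even integer, not $1$. This finishes your approach without any case analysis.

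\textbf{Comparison with the paper.} The paper argues differently. Shannon's theorem supplies two simplicial cells sharing three planes, which fixes the signs: $H_5\colon x/r_1+y/r_2+z/r_3=-1$ with all $r_i>0$. Taking $r_1$ to be the smallest, a direct computation gives $V(H_2,H_3,H_4,H_5)=r_2r_3(1+r_1)^3/(6(r_2-r_1)(r_3-r_1))>1/6$. The residue identity is sign-free and self-contained. The paper instead uses a combinatorial input and then needs only a single inequality.
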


\begin{proof}
Consider a family $\{H_1, \ldots, H_5 \}$ of $5$ planes in general position. By Shannon's theorem, $\{H_1, \ldots, H_5 \}$ contains two simplicial cells. Denote them by $T$ and $T'$. Without loss of generality, we may assume that $T$ is determined by $H_1, H_2, H_3, H_4$ and $T'$ is determined by $H_1,H_2, H_3, H_5$. After applying an affine transformation, we may assume that
$$
H_i : x_i = 0  \quad (i = 1, \ldots 3), \quad H_{4} : x_1 + x_2 + x_3 = 1.
$$
Then, it is trivial that $V(\{H_1,H_2,H_3,H_4\}) = 1/3!$.
Since $T'$ lies on the opposite side of $T$ with respect to each $H_i \, (i=1, \ldots d)$, there exist real numbers $r_1, r_2, r_3 > 0$ such that
$H_5 : x_1/r_1 + x_2/r_2 + x_3/r_3 = -1$.
Note that $r_1, r_2, r_3$ are pairwise distinct since $\{H_1, \ldots, H_5 \}$ is in general position.
Without loss of generality, we can assume 
$
r_1<r_2,\,r_1<r_3.
$
We now compute the volume of the tetrahedron determined by $H_2,H_3,H_4,H_5$.
Let its four vertices be
$
P_k
:=
\bigcap_{j\in\{2,3,4,5\}\setminus\{k\}} H_j,
\, (2 \leq k \leq 5).
$
A straightforward computation gives
$P_5 = e_1$, $P_4=-r_1e_1$, $P_3=-\frac{r_1(r_3+1)}{r_3-r_1}e_1 + \frac{r_3(r_1+1)}{r_3-r_1}e_3$,
and
$P_2=-\frac{r_1(r_2+1)}{r_2-r_1}e_1 + \frac{r_2(r_1+1)}{r_2-r_1}e_2$.
Therefore,
$$
V(\{H_2,H_3,H_4,H_5\}) =
\frac{1}{3!}
\left|
\det
\left(
P_5-P_4,\,
P_2-P_4,\,
P_3-P_4
\right)
\right| 
=
\frac{1}{3!} \cdot
\frac{
r_2r_3(1+r_1)^3
}{
(r_2-r_1)(r_3-r_1)
}.
$$
Since
$
0<r_2-r_1<r_2,
$
and
$
0<r_3-r_1<r_3,
$
we obtain
$
V(\{H_2,H_3,H_4,H_5\}) > (1+r_1)^3/3! = V(\{H_1,H_2,H_3,H_4\}).
$
\end{proof}

Fix a sufficiently large constant $B$. By the hypergraph Ramsey theorem, if $N$ is sufficiently large, then there exists a blue clique $\mathcal{B}$ of size $B$.
Fix $\mathcal{T} = \{ H_1, H_2, H_3 \} \in \binom{\mathcal{B}}{3}$ and $\lambda > 0$. 
Since $\mathcal{T}$ is not exceptional, there exists $H \in \mathcal{H} \backslash \mathcal{T}$ such that $V (\mathcal{T} \cup \{H \})^2 \neq \lambda$. Since $\mathcal{T} \cup \{ H \}$ is in general position, $F_{\mathcal{T}, \lambda}(H^{*}) \neq 0$.
Thus, the restriction of $F_{\mathcal{T}, \lambda}$ to $C$ is not identically zero. Using that $C$ is an irreducible algebraic curve of degree at most $O(1)$,
$$
|C \cap Z (F_{\mathcal{T}, \lambda})| \leq 6 \deg (C) = O(1).
$$

Now consider the complete $4$-uniform hypergraph with vertex set $\mathcal{B}$ and colour each hyperedge by the volume of the tetrahedron it determines. This colouring is $|C \cap Z(F_{\mathcal{T}, \lambda})|$-good. Hence, by Proposition~\ref{prop:conlon}, $\mathcal{B}$ contains a rainbow clique of size

$$
\Omega \left(\left(\frac{B}{|C \cap Z (F_{\mathcal{T}, \lambda})|}\right)^\frac{1}{7}\right) = \Omega (B^{1/7}).
$$
Thus, $D_3 (\mathcal{H}) = \Omega (B^{1/7})$.
In particular, if $N$ is sufficiently large, then $D_3 (\mathcal{H}) \geq 8$.

We are now ready to prove the main assertion.
Let $\mathcal{M}$ be a largest distinct volume subset of $\mathcal{H}$, and put $m := |\mathcal{M}| = D_3 (\mathcal{H}) \leq N$.
We take $N$ sufficiently large so that $m \geq 8$.
Then, for every $H \in \mathcal{H} \backslash \mathcal{M}$, one of the following holds:

\begin{enumerate}[label={Case(\Roman*):}]
    \item There exist $\mathcal{T} \in \binom{\mathcal{M}}{3}$ and $\mathcal{Q} \in \binom{\mathcal{M}}{4}$ such that $V(\mathcal{T} \cup \{ H \} ) = V(\mathcal{Q})$. 
    \item There exist distinct $\mathcal{T}_1 , \mathcal{T}_2 \in \binom{\mathcal{M}}{3}$ such that $V(\mathcal{T}_1  \cup \{ H \} ) = V(\mathcal{T}_2 \cup \{ H \})$.
\end{enumerate}

In Case(I), for each $H \in \mathcal{H} \backslash \mathcal{M}$, the number of pairs $(\mathcal{T}, \mathcal{Q})$ satisfying $V(\mathcal{T} \cup \{ H \} ) = V(\mathcal{Q})$ is at most $\binom{m}{3} \binom{m}{4} = O(m^7)$. Fix one such pair $(\mathcal{T}, \mathcal{Q})$, and let $\lambda = V(\mathcal{Q})^2$, $\mathcal{T} = \{ H_1, H_2, H_3 \}$, where $H_i: a_i x + b_i y + c_i z + d_i = 0$. For $H^{*} = [a:b:c:d]$, define
$$
F_{\mathcal{T}, \mathcal{Q}} (H^{*})  := \Delta_{0123}^6 - 36 \lambda^2 (\Delta_{123} \Delta_{012} \Delta_{013} \Delta_{023})^2.
$$
Then this is a homogeneous polynomial of degree at most $6$ on $\mathbb{RP}^3$. Moreover,
$$
\mathcal{S}_{\mathcal{T},\mathcal{Q}}^{*} := \{ H^{*} = [a:b:c:d] : V(\mathcal{T} \cup \{ H \} ) = V(\mathcal{Q}), H \in \mathcal{H} \} \subset Z(F_{\mathcal{T},\mathcal{Q}}).
$$
We claim that $C \nsubseteq Z(F_{\mathcal{T},\mathcal{Q}})$.
Indeed, suppose that $C \subseteq Z(F_{\mathcal{T},\mathcal{Q}})$. Since $m \geq 8$, we can choose $H \in \mathcal{M} \subseteq \mathcal{H}$ such that $H \notin \mathcal{T} \cup \mathcal{Q}$. By the assumption, $F_{\mathcal{T}, \mathcal{Q}} (H^{*}) = 0$, and since $\mathcal{T} \cup \{ H \}$ is in general position, $H$ satisfies $V(\mathcal{T} \cup \{ H \}) = V (\mathcal{Q})$, a contradiction.
Since $C$ is irreducible and has degree at most $O(1)$, Bézout's theorem gives
$$
|\mathcal{S}_{\mathcal{T}, \mathcal{Q}}^{*}| =|C \cap \mathcal{S}_{\mathcal{T}, \mathcal{Q}}^{*}| \leq |C \cap Z(F_{\mathcal{T},\mathcal{Q}})| = 6 \cdot O(1) = O(1).
$$
In Case(II), for each $H \in \mathcal{H} \backslash \mathcal{M}$, the number of pairs $(\mathcal{T}_1, \mathcal{T}_2)$ satisfying $V(\mathcal{T}_1  \cup \{ H \} ) = V(\mathcal{T}_2 \cup \{ H \})$ is at most $\binom{m}{3} \binom{m}{3} = O(m^6)$.
First, fix a disjoint pair $(\mathcal{T}_1,\mathcal{T}_2)$ with $\mathcal{T}_1 = \{ H_1, H_2, H_3 \}$ and $\mathcal{T}_2 = \{ H_4, H_5, H_6 \}$, where $H_i: a_i x + b_i y + c_i z + d_i = 0$. For $H^{*} = [a:b:c:d]$, define
$$
G_{\mathcal{T}_1, \mathcal{T}_2} (H^{*})  :=  \Delta_{0123}^6 (\Delta_{456} \Delta_{045} \Delta_{046} \Delta_{056})^2 - \Delta_{0456}^6 (\Delta_{123} \Delta_{012} \Delta_{013} \Delta_{023})^2.
$$
Then this is a homogeneous polynomial of degree at most $12$ on $\mathbb{RP}^3$. Moreover,
$$
\mathcal{T}_{\mathcal{T}_1,\mathcal{T}_2}^{*} := \{ H^{*} = [a:b:c:d] : V(\mathcal{T}_1 \cup \{ H \} ) = V(\mathcal{T}_2 \cup \{ H \}), H \in \mathcal{H} \} \subset Z(G_{\mathcal{T}_1,\mathcal{T}_2}).
$$

We claim that $C \nsubseteq  Z(G_{\mathcal{T}_1,\mathcal{T}_2})$.
Indeed, suppose that $C \subseteq  Z(G_{\mathcal{T}_1,\mathcal{T}_2})$. Since $|\mathcal{T}_1 \cup \mathcal{T}_2| \leq 6$, we can choose $H \in \mathcal{M} \backslash (T_1 \cup T_2)$. By the assumption, $G_{\mathcal{T}_1, \mathcal{T}_2} (H^{*}) = 0$, and since $\mathcal{T}_1 \cup \{ H \}$ and $\mathcal{T}_2 \cup \{ H \}$ are both in general position, we obtain $V(\mathcal{T}_1 \cup \{ H \}) = V(\mathcal{T}_1 \cup \{ H \})$, a contradiction.

Next, suppose that $\mathcal{T}_1$ and $\mathcal{T}_2$ are not disjoint.
If $|\mathcal{T}_1 \cap \mathcal{T}_2| = 1$, without loss of generality, let $\mathcal{T}_1 = \{ H_1, H_2, H_3 \}$ and $\mathcal{T}_2 = \{H_1, H_4, H_5 \}$. Then the polynomial to consider is
$$
G_{\mathcal{T}_1, \mathcal{T}_2} (H^{*})  :=  \Delta_{0123}^6 (\Delta_{145} \Delta_{014} \Delta_{015} \Delta_{045})^2 - \Delta_{0145}^6 (\Delta_{123} \Delta_{012} \Delta_{013} \Delta_{023})^2.
$$
If $|\mathcal{T}_1 \cap \mathcal{T}_2| = 2$, without loss of generality, let $\mathcal{T}_1 = \{ H_1, H_2, H_3 \}$ and $\mathcal{T}_2 = \{H_1, H_2, H_4 \}$. Then, the polynomial to consider is
$$
G_{\mathcal{T}_1, \mathcal{T}_2} (H^{*})  :=  \Delta_{0123}^6 (\Delta_{124} \Delta_{012} \Delta_{014} \Delta_{024})^2 - \Delta_{0124}^6 (\Delta_{123} \Delta_{012} \Delta_{013} \Delta_{023})^2.
$$
In either case, one can verify in the same way that $C \nsubseteq  Z(G_{\mathcal{T}_1,\mathcal{T}_2})$.
Therefore, by Bézout's theorem,

$$
|\mathcal{T}_{\mathcal{T}_1, \mathcal{T}_2}^{*}| = |C \cap \mathcal{T}_{\mathcal{T}_1, \mathcal{T}_2}^{*}| \leq 12 \cdot O(1) = O(1).
$$

Hence,

$$
N - m = |\mathcal{H} \backslash \mathcal{M}| = O(1) (O(m^7) + O(m^6)),
$$

and therefore $m = \Omega(N^{1/7})$.
\end{proof}

\medskip
\noindent {\bf Acknowledgement.} 
The author would like to thank G\"{u}nter Rote for pointing out the possibility of using the moment curve in the upper bound construction for $D_d(n)$.

The work was supported by grant no. 23-04949X of the Czech Science Foundation (GA\v{C}R) and the Charles University Grant Agency (GAUK) project number 378426.

\typeout{}
\bibliography{Distinct_volume_subsets_problem}
\bibliographystyle{plainurl}

\end{document}